\DeclareMathOperator{\Hom}{Hom}	
\newtheorem{theorem}{Theorem}[section]
\newtheorem{prop}[theorem]{Proposition}
\newtheorem{defn}[theorem]{Definition}
\newtheorem{rem}[theorem]{Remark}
\newtheorem{exam}[theorem]{Example}
\newtheorem{thm}[theorem]{Theorem}
\newcommand{\C}{\mathbb{C}} 
\newcommand{\N}{\mathbb{N}} 
\newcommand{\F}{\mathbb{F}} 
\newcommand{\g}{\mathfrak{g}} 
\newcommand{\gl}{\mathfrak{gl}} 
\newcommand{\LA}{\Longleftarrow} 
\newcommand{\RA}{\Longrightarrow} 
\newcommand{\ra}{\longrightarrow} 
\newcommand{\GL}{{\rm GL}} 
\newcommand{\SL}{{\rm SL}} 
\newcommand{\om}{\omegaup} 
\newcommand{\Der}{{\rm Der}}  
\newcommand{\Aut}{{\rm Aut}}  
\begin{document}
\setlength{\oddsidemargin}{0cm}
\setlength{\evensidemargin}{0cm}

\title{\scshape Derivations, Automorphisms, and Representations of Complex $\omegaup$-Lie Algebras}
\author{\scshape Yin Chen}
\address{School of Mathematics and Statistics, Northeast Normal University, Changchun 130024, P.R. China}
\email{ychen@nenu.edu.cn}

\author{\scshape Ziping Zhang}
\address{School of Mathematics and Statistics, Northeast Normal University, Changchun 130024, P.R. China}
\email{zhangzp586@nenu.edu.cn}

\author{\scshape Runxuan Zhang}
\address{School of Mathematics and Statistics, Northeast Normal University, Changchun 130024, P.R. China}
\email{zhangrx728@nenu.edu.cn}

\author{\scshape Rushu Zhuang}
\address{School of Mathematics and Statistics, Northeast Normal University, Changchun 130024, P.R. China}
\email{zhuangrs246@nenu.edu.cn}

\date{\today}
\def\shorttitle{Derivations, Automorphisms, and Representations of Complex $\omegaup$-Lie Algebras}

\begin{abstract} Let $(\g,\om)$ be a finite-dimensional non-Lie complex $\om$-Lie algebra. 
We study the derivation algebra $\Der(\g)$ and the automorphism group $\Aut(\g)$ of $(\g,\om)$. 
We introduce the notions of 
$\om$-derivations and $\om$-automorphisms of $(\g,\om)$ which naturally preserve the bilinear form $\om$.
We show that the set  $\Der_{\om}(\g)$ of all $\om$-derivations is a Lie subalgebra of $\Der(\g)$ and 
the set  $\Aut_{\om}(\g)$ of all $\om$-automorphisms is a subgroup of $\Aut(\g)$.
 For any 3-dimensional and 4-dimensional nontrivial $\om$-Lie algebra $\g$, we compute $\Der(\g)$ and $\Aut(\g)$ explicitly, and  study some Lie group properties of  $\Aut(\g)$. We also study representation theory of $\om$-Lie algebras. We show that all 3-dimensional nontrivial  $\om$-Lie algebras are multiplicative, as well as we provide a 4-dimensional example of $\om$-Lie algebra that is not multiplicative. Finally, we show that any irreducible representation of the simple $\om$-Lie algebra $C_{\alphaup}(\alphaup\neq 0,-1)$  is 1-dimensional. 
\end{abstract}

\subjclass[2010]{17B60, 17A30.}
\keywords{$\omegaup$-Lie algebras; derivations; automorphisms; irreducible representations.}

\maketitle
\baselineskip=17pt


\section{\scshape Introduction}

\setcounter{equation}{0}
\renewcommand{\theequation}
{1.\arabic{equation}}
\setcounter{theorem}{0}
\renewcommand{\thetheorem}
{1.\arabic{theorem}}

In 2007, Nurowski \cite{Nur2007} was motivated by  the study of isoparametric hypersufaces in Riemannian geometry (Bobie\'{n}ski-Nurowski \cite{BN2007} and Nurowski \cite{Nur2008}), and 
introduced the notion of $\om$-Lie algebras which can be viewed as a kind of natural generalization of Lie 
algebras. Specifically, 
for a finite-dimensional  vector space $\g$ over a field $\F$ of characteristic zero equipped with a skew symmetric bracket $[-,-]:\g\times \g \longrightarrow \g$  and a bilinear form $\om: \g\times \g \longrightarrow \F$,  we say that the triple $(\g,[-,-],\om)$ is an \textit{$\om$-Lie algebra} if
\begin{equation}\label{Jacob-identity}
[[x,y],z]+ [[y,z],x]+  [[z,x],y]=\om(x,y)z+\om(y,z)x+\om(z,x)y
\end{equation}
for all $x,y,z\in \g$. Equation (\ref{Jacob-identity}) is called the $\om$-\textit{Jacobi identity}. By this identity, we see that the bilinear form $\om$ is skew-symmetric for any $\om$-Lie algebra $(\g,\om)$.
Clearly,  an $\om$-Lie algebra $(\g,\om)$ with $\om=0$ is an ordinary  Lie algebra, which  is called a \textit{trivial $\om$-Lie algebra.}  An $\om$-Lie algebra $(\g,\om)$ is  called \textit{nontrivial} (or \textit{non-Lie}) if $\g$ is not a Lie algebra.

In 2010, Zusmanovich \cite{Zus2010} developed a fundamental structure theorem on finite-dimensional $\om$-Lie algebras over an algebraically closed field of characteristic zero, showing that nontrivial finite-dimensional $\om$-Lie algebras are either low-dimensional or have a very degenerate structure, see  \cite[Section 9, Theorem 1]{Zus2010}. 
Two important and interesting results were derived:  
If $\g$ is a finite-dimensional $\om$-Lie algebra with non-degenerate $\om$, then $\dim (\g)=2$ (see \cite[Lemma 8.1]{Zus2010});
A finite-dimensional semisimple $\om$-Lie algebra is either a Lie algebra, or has dimension $\leqslant 4$ (see \cite[Theorem 2]{Zus2010}). These results indicate essentially the importance of low-dimensional $\om$-Lie algebras.

By the definition of $\om$-Lie algebras, there are no nontrivial $\om$-Lie algebras in the cases of dimensions 1 and 2.
The first example of  nontrivial 3-dimensional $\om$-Lie algebra was given by Nurowski \cite{Nur2007} in which the author 
 gave a classification of 3-dimensional $\om$-Lie algebras over the field of real numbers, under 
the action of 3-dimensional orthogonal group.
In 2014, we  extended a method for classifying 3-dimensional complex Lie algebras appeared in Fulton-Harris \cite[Sections 10.2-10.4]{FH1991}
to obtain a classification of 3-dimensional complex $\om$-Lie algebras,
see Chen-Liu-Zhang \cite[Theorem 2]{CLZ2014}. This  classification, together with a result of Zusmanovich
\cite[Lemma 8.2]{Zus2010} which asserts that any  4-dimensional $\om$-Lie algebra over an algebraically closed field contains a 3-dimensional subalgebra, has led us to complete a classification of 4-dimensional complex Lie algebras
in a recent paper Chen-Zhang \cite[Theorem 1.5]{CZ2016}. 
In particular, we obtained a classification of nontrivial finite-dimensional complex simple $\om$-Lie algebras, see \cite[Theorem 1.7]{CZ2016}.

The first purpose of  this paper is to study derivations and automorphisms of  low-dimensional complex $\om$-Lie algebras, which are two kinds of classical objects in the study of nonassociative algebra and representation theory. 
In what follows we fix the ground field to be $\C$, the field of complex numbers. Let's recall the definitions of classical  derivation and automorphism of any nonassociative algebra. 
Let $\g$ be a finite-dimensional $\om$-Lie algebra (as a nonassociative algebra) and $d:\g\ra\g$ be a linear map.
We say that $d$ is a \textit{derivation} of $\g$ if 
$$d([x,y])=[d(x),y]+[x,d(y)]$$ for any $x,y\in\g$.
We write $\gl(\g)$ for the general linear Lie algebra on $\g$. Then the set $\Der(\g)$ of all derivations of $\g$ forms a Lie subalgebra of $\gl(\g)$, which is called the \textit{derivation algebra} of $\g$. A linear isomorphism  $\rhoup: \g\ra\g$ is called an \textit{automorphism} of $\g$ if $$\rhoup([x,y])=[\rhoup(x),\rhoup(y)]$$ for any $x,y\in\g$. The set $\Aut(\g)$ of all automorphisms of $\g$ forms a closed  Lie subgroup of the general linear group $\GL(\g)$, which means that $\Aut(\g)$ is a matrix Lie group,  see 
Sagle-Walde \cite[Proposition 7.1]{SW1973}. We call $\Aut(\g)$ the \textit{automorphism group} of $\g$. 
It is well-known that the Lie algebra of $\Aut(\g)$ is just the derivation algebra $\Der(\g)$, see 
Sagle-Walde \cite[Proposition 7.3 (b)]{SW1973}.
However,  $\om$-Lie algebras as a kind of special nonassociative algebras and a generalization of Lie algebras, deserve to have their feature and properties. Comparing with  Hom-Lie algebras and their derivations (automorphisms), see Sheng \cite[Section 3]{She2012} and Jin-Li \cite[Definition 1.2]{JL2008}, we introduce the following two notions: $\om$-derivation and $\om$-automorphism, which both preserve naturally the skew-symmetric bilinear form $\om$. 

\begin{defn}{\rm
Let $\g$ be a finite-dimensional $\om$-Lie algebra.
A derivation $d\in\Der(\g)$ is called an \textit{$\om$-derivation} of $\g$ if $$\om(d(x),y)+\om(x,d(y))=0$$ for any $x,y\in\g$.
An automorphism $\rhoup\in\Aut(\g)$ is called an \textit{$\om$-automorphism} of $\g$ if $$\om(x,y)=\om(\rhoup(x),\rhoup(y))$$ for any $x,y\in\g$.
}\end{defn}
\noindent We write $\Der_{\om}(\g)$ for the set consisting of all $\om$-derivations of $\g$, and $\Aut_{\om}(\g)$ for the set of all $\om$-automorphisms of $\g$. Clearly, $\Der_{\om}(\g)\subseteq \Der(\g)$ and $\Aut_{\om}(\g)\subseteq \Aut(\g)$. 
However, Proposition \ref{pro5.1} below indicates that the two equalities do not necessary hold here.
In this paper we calculate 
$$\Der(\g), \Der_{\om}(\g),\Aut(\g) \textrm{ and } \Aut_{\om}(\g)$$
for 3-dimensional and 4-dimensional $\om$-Lie algebras. We also study some Lie algebra properties of $\Der(\g)$ and  Lie group properties of $\Aut(\g)$ for 3-dimensional $\om$-Lie algebras.

The second purpose of this paper is to study representation theory of $\om$-Lie algebras.  Given an $\om$-Lie algebra $\g$ and a
$\g$-module $V$, we construct a  semi-direct product on $\g\oplus V$  such that 
 $(\g\oplus V,[-,-],\Omega)$ becomes an $\Omega$-Lie algebra. We clarify some assertions in 
Zusmanovich \cite[Section 2]{Zus2010} and give detailed proofs. We also study multiplicative $\om$-Lie algebras, showing that 
all 3-dimensional nontrivial $\om$-Lie algebras are multiplicative while giving an example of non-multiplicative $\om$-Lie algebra in 4-dimensional case. Furthermore, we study irreducible representations of $\om$-Lie algebras. Recall that a $\g$-module $V$ is called \textit{irreducible} if $V$ has no nontrivial submodules, i.e., if $V_{0}\subseteq V$ is any submodule of $V$, then $V_{0}$ is either $\{0\}$ or equal to $V$. We characterize all irreducible representations of $C_{\alphaup} (\alphaup\neq 0,-1)$ in detail, showing that there exist only 1-dimensional irreducible representations for $C_{\alphaup} (\alphaup\neq 0,-1)$.

This paper is organized as follows. Section 2 is a preliminary section which contains the classification lists of 3-dimensional and 
4-dimensional nontrivial complex  $\om$-Lie algebras.
We show that $\Der_{\om}(\g)\subseteq \Der(\g)$ is a Lie subalgebra and $\Aut_{\om}(\g)\leqslant \Aut(\g)$ is a subgroup
for any finite-dimensional $\om$-Lie algebra $\g$.
In Section 3 we calculate $\Der(\g)$ and $\Der_{\om}(\g)$, and study their structures, 
for any 3-dimensional  $\om$-Lie algebra $\g$.  We give a detailed proof for the case $\g=L_{1}$ (Proposition \ref{pro3.1}), omitting the proofs of other cases because of the similarity.
 Section 4 is devoted to calculating  
$\Aut_{\om}(\g)$ and $\Aut(\g)$ when $\g$ is a 3-dimensional $\om$-Lie algebra. Usually, the automorphism group of a nonassociative algebra is more complicated than the corresponding  derivation algebra. 
Here we study $\Aut(L_{1})$ and $\Aut(A_{\alphaup})$ in detail, see Propositions \ref{pro4.1} and \ref{pro4.3}. In particular,  we use a ``trick'' coming from Linear Algebra to characterize $\Aut(A_{\alphaup})$.
We prove that $\Aut(L_{1})$ and $\Aut(A_{\alphaup})$ are connected matrix Lie groups, and moreover $\exp(\Der(L_{1}))=\Aut(L_{1})$ and 
$\exp(\Der(A_{\alphaup}))=\Aut(A_{\alphaup})$. In Section 5, we apply similar arguments to calculate 
derivations and  automorphisms for 4-dimensional complex  $\om$-Lie algebras, without detailed proofs for saving space.
The main results on derivations and automorphisms are summarized in Tables \ref{3d}-\ref{4a}.
Section 6 is devoted to clarifying some discussions in Zusmanovich \cite[Section 2]{Zus2010}.
We give a sufficient and necessary condition (essentially due to Zusmanovich) for that an $\om$-Lie algebra has 1-dimensional module, which as an application
shows that all nontrivial 3-dimensional $\om$-Lie algebras are multiplicative (Proposition \ref{prop6.4}). 
In Section 7, we study irreducible representations of $C_{\alphaup}$ where $\alphaup\neq 0,-1$.
We show that any irreducible representation of $C_{\alpha}$ is 1-dimensional (Theorem \ref{thm7.1}). 
Note that if we extend the parameter $\alphaup$ to contain $-1$, then $C_{-1}$ is just isomorphic to the special linear Lie algebra
$\mathfrak{sl}_{2}(\C)$ which has a unique irreducible representation in any finite dimension. 
These results indicate that structure and representation of $\om$-Lie algebras 
might be very ``degenerate'' in sense of Zusmanovich \cite[Section 1]{Zus2010}. 

Throughout this paper, we assume that the ground field is the field of complex numbers $\C$, $W^{*}:=\Hom(W,\C)$ denotes the dual space of a vector space $W$, all vector spaces (modules) are finite-dimensional, and all $\om$-Lie algebras are nontrivial, unless stated otherwise. 

\section{\scshape Preliminaries}

\setcounter{equation}{0}
\renewcommand{\theequation}
{2.\arabic{equation}}
\setcounter{theorem}{0}
\renewcommand{\thetheorem}
{2.\arabic{theorem}}

Let's begin with the  classifications of  3-dimensional and 4-dimensional nontrivial $\om$-Lie algebras over $\C$.

\begin{thm} [Chen-Liu-Zhang \cite{CLZ2014}] \label{t3d}
Let $\g$ be a   3-dimensional nontrivial $\om$-Lie algebra over $\C$, then it must be isomorphic to one of the following algebras:
\begin{enumerate}
  \item \quad $L_{1}:\quad [x,z]=0,[y,z]=z, [x,y]=y\textrm{ and }\om(y,z)=\om(x,z)=0,\om(x,y)=1.$
  \item \quad $L_{2}:\quad [x,y]=0,[x,z]=y,[y,z]=z \textrm{ and } \om(x,y)=0, \om(x,z)=1, \om(y,z)=0.$
  \item \quad
  $A_{\alphaup} :\quad [x,y]=x,[x,z]=x+y, [y,z]=z+\alphaup x\textrm{ and  }
\om(x,y)=\om(x,z)=0,$
\begin{center}
$\om(y,z)=-1,\textrm{ where }\alphaup\in \mathbb{C}.$
\end{center}
  \item \quad $B: \quad [x,y]=y, [x,z]=y+z, [y,z]=x\textrm{ and } \om(x,y)=\om(x,z)=0,$
$\om(y,z)=2.$
   \item \quad $C_{\alphaup} : \quad [x,y]=y, [x,z]=\alphaup z, [y,z]=x\textrm{ and } \om(x,y)=\om(x,z)=0,$
  \begin{center}
$\om(y,z)=1+\alphaup,\textrm{ where } 0,-1\neq\alphaup\in \mathbb{C}.$
  \end{center}
\end{enumerate}
\end{thm}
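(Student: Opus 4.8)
The plan is to classify the underlying anticommutative bracket first and to recover $\om$ as a byproduct. The starting observation is that both sides of the $\om$-Jacobi identity \eqref{Jacob-identity} are alternating and trilinear in $(x,y,z)$, so on a three-dimensional space the identity is equivalent to its single instance on a fixed basis $\{e_1,e_2,e_3\}$. Writing $u$ for the common value of the two sides there, a direct check gives $\om(u,e_i)=0$ for all $i$, so $u$ spans $\ker\om$; since an alternating form on an odd-dimensional space is degenerate and $\om\neq0$ forces $\om$ to have rank $2$, the kernel is one-dimensional and $u\neq0$. Conversely, for \emph{any} skew bracket on $\g$ the Jacobiator $J(x,y,z)=[[x,y],z]+[[y,z],x]+[[z,x],y]$ is alternating trilinear with values in $\g$, hence equals $\det(x,y,z)\,u$ for a single vector $u$, and there is a unique skew form $\om$ (the one whose Hodge dual relative to a fixed volume is $u$) making $(\g,[-,-],\om)$ an $\om$-Lie algebra. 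Thus $\om$ is \emph{determined} by the bracket, the algebra is non-Lie exactly when $u\neq0$, and the classification reduces to classifying three-dimensional anticommutative algebras with $J\neq0$ up to linear isomorphism.

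Next I would encode the bracket by contraction. Fixing a volume form identifies $\Lambda^2\g\cong\g^{*}$, so the bracket $\Lambda^2\g\to\g$ becomes an element $T\in\g\otimes\g=\mathrm{Sym}^2\g\oplus\Lambda^2\g$; write $T=S+N$ with $S$ symmetric and $N$ (equivalently a vector $n\in\g$) its alternating part. A change of basis $g\in\GL(\g)$ acts by the twisted congruence $T\mapsto(\det g)^{-1}\,g\,T\,g^{\mathsf T}$, and one computes that the Jacobiator vector is, up to a universal nonzero scalar, $Sn$; in particular the algebra is non-Lie iff $Sn\neq0$ (so in particular $n\neq0$), while $\dim[\g,\g]=\mathrm{rank}\,T$. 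The problem is now purely linear-algebraic: classify pairs $(S,n)$ with $Sn\neq0$ up to the twisted-congruence action. A short argument rules out $\dim[\g,\g]\le1$ (if $[\g,\g]$ is at most one-dimensional the Jacobiator is forced to vanish, so $\g$ is a Lie algebra), leaving the two cases $\mathrm{rank}\,T\in\{2,3\}$.

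In the case $\mathrm{rank}\,T=2$, i.e. $\dim[\g,\g]=2$, I would fix a complement to $\mathfrak{h}=[\g,\g]$ and study the operator $\mathrm{ad}$ of a transverse vector acting on the two-dimensional $\mathfrak{h}$: putting it in Jordan form and using the vector equation $Sn\neq0$ together with \eqref{Jacob-identity} to pin the remaining structure constants singles out exactly the diagonalizable and the single-Jordan-block normal forms, which after rescaling the basis become $L_{1}$ and $L_{2}$. In the case $\mathrm{rank}\,T=3$ the bracket is an isomorphism $\Lambda^2\g\xrightarrow{\ \sim\ }\g$ and $\g$ is perfect; here I would classify $(S,n)$ by the finer invariants that survive the twisted congruence, namely the eigenvalue data of $S$ relative to $n$ (equivalently the eigenvalue ratio of a distinguished semisimple part of $\mathrm{ad}$). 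This produces the two one-parameter families $A_{\alphaup}$ and $C_{\alphaup}$ together with the degenerate member $B$; the modulus $\alphaup$ is exactly the surviving eigenvalue ratio, and the excluded values $\alphaup=0,-1$ correspond to degenerations ($\alphaup=-1$ makes $\mathrm{ad}$ trace-free, yielding the Lie algebra $\mathfrak{sl}_{2}(\C)$, and $\alphaup=0$ drops the rank), which is why they are absent from the non-Lie list.

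I expect the perfect case $\mathrm{rank}\,T=3$ to be the main obstacle. The difficulty is twofold: reducing $(S,n)$ to a unique normal form requires controlling the residual stabilizer of the twisted-congruence action (the $\det g$ factor couples the congruence on $S$ to the transformation of $n$, so one cannot diagonalize $S$ and scale $n$ independently), and one must then prove that the three families are genuinely distinct and non-redundant. Separating $A_{\alphaup}$, $B$, and $C_{\alphaup}$ is handled by discrete invariants ($\mathrm{rank}\,S$, nilpotency of $\mathrm{ad}$, and the isotropy of $n$ with respect to $S$), while showing that $\alphaup$ is a true modulus requires identifying it with an isomorphism invariant and determining the residual symmetry $\alphaup\mapsto\alphaup^{-1}$ coming from interchanging the two relevant eigendirections. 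The remaining steps, namely verifying completeness of the normal-form list and checking that each of the five displayed triples actually satisfies \eqref{Jacob-identity}, are routine computations.
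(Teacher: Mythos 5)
Your framework cannot be compared line-by-line with anything in this paper, because the paper contains no proof of Theorem \ref{t3d}: the statement is imported from Chen--Liu--Zhang \cite{CLZ2014}, whose argument, as the Introduction explains, extends the Fulton--Harris case analysis of $\dim[\g,\g]$ and the adjoint operators. Your opening reductions are correct and constitute genuinely different machinery: in dimension $3$ both sides of \eqref{Jacob-identity} are alternating and trilinear, so $\om$ is uniquely determined by the bracket, bracket isomorphisms automatically preserve $\om$, and the problem does reduce to classifying $3$-dimensional anticommutative non-Lie algebras; in the Bianchi-type encoding $T=S+N$ the Jacobiator vector is indeed a universal nonzero multiple of $Sn$, $\dim[\g,\g]=\mathrm{rank}\,T$, and $\mathrm{rank}\,T\leqslant 1$ does force the Jacobi identity. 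Had the two case analyses then been executed, this would be a legitimate alternative to the cited route.

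They are not executed, and the one concrete classification criterion you do commit to is not well defined. In the rank-$2$ case you propose to distinguish $L_1$ from $L_2$ by the Jordan type of $\mathrm{ad}$ of a transverse vector acting on $\mathfrak{h}=[\g,\g]$ (diagonalizable versus a single Jordan block). That type is not an invariant of the algebra, because it depends on the choice of transverse vector: in $L_1$ the transverse element $x+y+z$ acts on $\mathfrak{h}=\langle y,z\rangle$ by $y\mapsto y-z$, $z\mapsto z$, a non-diagonalizable Jordan block, while in $L_2$ the transverse element $x+y$ acts by $y\mapsto 0$, $z\mapsto y+z$, which is diagonalizable with eigenvalues $0,1$; so each of $L_1$, $L_2$ realizes both Jordan types. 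What actually separates them is a joint invariant of $\mathrm{ad}_x|_{\mathfrak{h}}$ and the bracket on $\mathfrak{h}$ --- for instance, the Jacobiator vector $J(x,y,z)$ equals $z\in[\mathfrak{h},\mathfrak{h}]$ for $L_1$ but $-y\notin[\mathfrak{h},\mathfrak{h}]$ for $L_2$ --- and your sketch never normalizes this pair, so it cannot conclude that exactly two isomorphism classes survive. In the rank-$3$ case, which contains three of the five families, you explicitly defer the derivation of the normal forms $A_{\alphaup}$, $B$, $C_{\alphaup}$ and the control of the residual stabilizer of the twisted congruence action, calling it the main obstacle; that is precisely the content of the theorem, not a finishing detail. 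Note also that the statement as given only requires completeness --- every nontrivial $3$-dimensional $\om$-Lie algebra must be isomorphic to a member of the list --- so the distinctness and redundancy questions you dwell on (such as the symmetry $C_{\alphaup}\simeq C_{1/\alphaup}$) are tangential, while the completeness argument is exactly what is missing.
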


\begin{thm} [Chen-Zhang \cite{CZ2016}]  \label{t4d}
Any  4-dimensional nontrivial $\om$-Lie algebra over $\C$ must be isomorphic to one of the following algebras:
$$\bigg\{L_{1,1}, \dots, L_{1,8}, L_{2,1}, L_{2,2}, L_{2,3}, L_{2,4}, \widetilde{B},  E_{1,\alphaup} (\alphaup\neq 0,1) ,F_{1,\alphaup} (\alphaup\neq 0,1) ,G_{1,\alphaup}, H_{1,\alphaup},\widetilde{A}_{\alphaup}, \widetilde{C}_{\alphaup}(\alphaup\neq 0,-1)\bigg\},$$
where the parameter $\alphaup\in\C$. For the nontrivial generating relations of these $\om$-Lie algebras see  \cite[Sections 2-6]{CZ2016} or Table \ref{4d} below.
\end{thm}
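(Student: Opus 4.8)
The plan is to classify by the standard one-dimensional extension method: locate a codimension-one subalgebra, understand which $3$-dimensional algebras can occur inside $\g$, determine all ways to adjoin a fourth generator subject to the $\om$-Jacobi identity, and finally reduce the resulting parametrized families up to isomorphism. Two structural facts drive the reduction. First, since $\g$ is nontrivial, $\om$ is a nonzero skew-symmetric bilinear form; by Zusmanovich's Lemma~8.1 \cite{Zus2010} a non-degenerate $\om$ forces $\dim\g=2$, so in dimension $4$ the rank of $\om$ is even and strictly less than $4$, hence exactly $2$, and the radical of $\om$ is $2$-dimensional. Second, by Zusmanovich's Lemma~8.2 \cite{Zus2010}, $\g$ contains a $3$-dimensional subalgebra $\h$, which under the restricted bracket and form is itself a $3$-dimensional $\om$-Lie algebra. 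Either $\om|_{\h}=0$, in which case $\h$ is one of the classically classified $3$-dimensional complex Lie algebras, or $\om|_{\h}\neq 0$, in which case Theorem~\ref{t3d} forces $\h\in\{L_{1},L_{2},A_{\alphaup},B,C_{\alphaup}\}$.

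Writing $\g=\h\oplus\C w$, the remaining structure is encoded by a linear map $d\colon\h\to\h$ and two functionals $\lambda,\mu\in\h^{*}$, via $[h,w]=d(h)+\lambda(h)w$ and $\mu(h)=\om(h,w)$. All triples lying in $\h$ merely reproduce the $\om$-Jacobi identity of $\h$, and any triple in which $w$ occurs twice vanishes by skew-symmetry; hence the only genuinely new constraints come from triples $(x,y,w)$ with $x,y\in\h$. Separating the $\C w$-component from the $\h$-component, the $\om$-Jacobi identity reduces to
\[
\lambda([x,y])=\om(x,y),\qquad d([x,y])-[d(x),y]-[x,d(y)]=\lambda(y)d(x)-\lambda(x)d(y)+\mu(y)x-\mu(x)y
\]
for all $x,y\in\h$. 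I would first use the left-hand equation to pin down $\lambda$ (it must factor $\om|_{\h}$ through the bracket of $\h$), and then read the right-hand equation as a \emph{twisted derivation} condition determining the admissible pairs $(d,\mu)$ relative to the fixed multiplication table of $\h$.

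For each candidate $\h$ I would substitute its bracket and form into these two equations and solve the resulting (mostly linear, occasionally quadratic) system for $(d,\lambda,\mu)$, producing for each $\h$ a finite collection of parametrized extensions. I would then discard those that are secretly Lie algebras ($\om\equiv 0$) or fail to be genuinely $4$-dimensional, and reduce the survivors up to isomorphism by acting with the changes of basis of $\g$ that preserve the type of $\h$ and the form $\om$ (equivalently, the $\om$-automorphisms together with rescalings and shears of $w$). Canonical representatives of these orbits are exactly the algebras obtained as extensions of $L_{1}$ (giving $L_{1,1},\dots,L_{1,8}$), of $L_{2}$ (giving $L_{2,1},\dots,L_{2,4}$), and of $A_{\alphaup},B,C_{\alphaup}$ (giving $\widetilde{A}_{\alphaup},\widetilde{B},\widetilde{C}_{\alphaup}$), together with the families $E_{1,\alphaup},F_{1,\alphaup},G_{1,\alphaup},H_{1,\alphaup}$ arising from the remaining (trivial-$\h$ or degenerate) branches.

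The main obstacle is not any single computation but the bookkeeping: the analysis branches over every admissible $\h$ and, within each, over the solution components of the twisted derivation equation, so completeness requires verifying that no branch is overlooked, in particular that the $\om|_{\h}=0$ branch contributes nothing beyond the listed families. The more delicate half is irredundancy: one must prove the listed algebras are pairwise non-isomorphic and that each parametrized family is in lowest terms. For this I would separate classes by isomorphism invariants, namely the dimension and structure of the derived algebra $[\g,\g]$, the center, the radical of $\om$, and ultimately the dimension of $\Der(\g)$ computed in Section~5; the parameter $\alphaup$ would be handled by identifying precisely the values at which distinct parameters yield isomorphic algebras, which is reflected in the exclusions $\alphaup\neq 0,1$ for $E_{1,\alphaup},F_{1,\alphaup}$ and $\alphaup\neq 0,-1$ for $\widetilde{C}_{\alphaup}$ in the list.
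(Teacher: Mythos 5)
Your proposal follows essentially the same route as the paper's: Theorem \ref{t4d} is quoted from Chen--Zhang \cite{CZ2016}, whose proof proceeds exactly as you outline — Zusmanovich's Lemma 8.2 supplies a $3$-dimensional subalgebra $\h$, the classification of Theorem \ref{t3d} (together with the classical Lie-algebra list when $\om|_{\h}=0$) identifies it, and the two constraint equations you correctly derive for the extension data $(d,\lambda,\mu)$ govern the adjunction of the fourth generator, after which the solutions are reduced up to isomorphism. One small correction: the families $E_{1,\alphaup},F_{1,\alphaup},G_{1,\alphaup},H_{1,\alphaup}$ do not arise from the trivial-$\h$ or degenerate branches — their relations in Table \ref{4d} show $[x,y]=y$, $[y,z]=z$, $\om(x,y)=1$ on the span of $\{x,y,z\}$, so they are further extensions of $L_{1}$ — but this mislabeling concerns only the bookkeeping you flag and does not affect the soundness of the method.
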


Let $\g$ be an $\om$-Lie algebra. The following two properties are elementary. 

\begin{prop}
$\Aut_{\om}(\g)$ is a subgroup of $\Aut(\g)$.
\end{prop}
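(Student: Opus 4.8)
The plan is to verify directly that $\Aut_{\om}(\g)$ satisfies the subgroup axioms inside $\Aut(\g)$. Since $\Aut_{\om}(\g)$ is already a subset of the group $\Aut(\g)$, it suffices to check three things: that the identity map lies in $\Aut_{\om}(\g)$, that the set is closed under composition, and that it is closed under taking inverses. The defining condition for membership is the preservation of the bilinear form, namely $\om(\rhoup(x),\rhoup(y))=\om(x,y)$ for all $x,y\in\g$.

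First I would observe that the identity map $\mathrm{id}_{\g}$ is obviously an automorphism and trivially satisfies $\om(\mathrm{id}(x),\mathrm{id}(y))=\om(x,y)$, so $\mathrm{id}_{\g}\in\Aut_{\om}(\g)$ and the set is nonempty. Next, for closure under composition, suppose $\rhoup,\sigmaup\in\Aut_{\om}(\g)$. Their composite $\rhoup\circ\sigmaup$ is already an automorphism, and for any $x,y\in\g$ I would compute
\begin{equation}
\om\big((\rhoup\circ\sigmaup)(x),(\rhoup\circ\sigmaup)(y)\big)=\om\big(\rhoup(\sigmaup(x)),\rhoup(\sigmaup(y))\big)=\om\big(\sigmaup(x),\sigmaup(y)\big)=\om(x,y),
\end{equation}
where the middle equality uses that $\rhoup$ preserves $\om$ and the last uses that $\sigmaup$ does. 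Hence $\rhoup\circ\sigmaup\in\Aut_{\om}(\g)$.

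For closure under inverses, let $\rhoup\in\Aut_{\om}(\g)$. Since $\rhoup$ is a bijective automorphism, $\rhoup^{-1}\in\Aut(\g)$. Given any $x,y\in\g$, I would apply the defining identity of $\rhoup$ to the elements $\rhoup^{-1}(x)$ and $\rhoup^{-1}(y)$, obtaining $\om(x,y)=\om\big(\rhoup(\rhoup^{-1}(x)),\rhoup(\rhoup^{-1}(y))\big)=\om\big(\rhoup^{-1}(x),\rhoup^{-1}(y)\big)$, which is exactly the condition that $\rhoup^{-1}$ preserves $\om$. Therefore $\rhoup^{-1}\in\Aut_{\om}(\g)$, and all subgroup axioms hold.

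This argument is entirely routine and essentially formal; there is no genuine obstacle, since the preservation condition is multiplicatively stable in the way required. The only point meriting a moment's care is the inverse case, where one must substitute preimages rather than compute directly, but this is standard. I would present the whole proof compactly, emphasizing that the bilinear form $\om$ behaves well under composition precisely because the preservation condition $\om(\rhoup(x),\rhoup(y))=\om(x,y)$ is itself preserved under the group operations of $\Aut(\g)$.
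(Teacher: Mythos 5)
Your proof is correct and follows essentially the same approach as the paper: both arguments verify that the identity preserves $\om$, that inverses preserve $\om$ by substituting $\rhoup^{-1}(x),\rhoup^{-1}(y)$ into the defining identity, and that composites preserve $\om$ by chaining the two preservation conditions. The only difference is organizational—you use the three-step subgroup test (identity, composition, inverses) where the paper uses the one-step test with $\rhoup^{-1}\cdot\sigmaup$—which is immaterial.
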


\begin{proof}
Clearly, the identity map $I$ belongs to $\Aut_{\om}(\g)$. For any $\rhoup,\sigmaup\in\Aut_{\om}(\g)$, it suffices to show that 
$\rhoup^{-1}\cdot\sigmaup\in \Aut_{\om}(\g)$. Suppose $x,y\in\g$ are any two elements. Since $\om(x,y)=\om(I(x),I(y))=\om(\rhoup\cdot\rhoup^{-1}(x),\rhoup\cdot\rhoup^{-1}(y))=\om(\rhoup^{-1}(x),\rhoup^{-1}(y))$, we have $\rhoup^{-1}\in\Aut_{\om}(\g)$. Thus 
$\om(\rhoup^{-1}\cdot\sigmaup(x),\rhoup^{-1}\cdot\sigmaup(y))=\om(\sigmaup(x),\sigmaup(y))=\om(x,y)$ and so $\rhoup^{-1}\cdot\sigmaup\in \Aut_{\om}(\g)$.
\end{proof}

\begin{prop}
$\Der_{\om}(\g)$ is a Lie subalgebra of $\Der(\g)$.
\end{prop}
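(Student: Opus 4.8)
The plan is to verify the two closure properties that define a Lie subalgebra: that $\Der_{\om}(\g)$ is a linear subspace of $\Der(\g)$, and that it is closed under the Lie bracket $[d_1,d_2]=d_1\circ d_2-d_2\circ d_1$ inherited from $\gl(\g)$. Since we already know $\Der(\g)$ is a Lie algebra and $\Der_{\om}(\g)\subseteq\Der(\g)$, it suffices to check these two conditions, working throughout from the defining condition that $d\in\Der_{\om}(\g)$ means $\om(d(x),y)+\om(x,d(y))=0$ for all $x,y\in\g$.

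First I would dispatch the subspace claim. Given $d_1,d_2\in\Der_{\om}(\g)$ and scalars $\lambda,\mu\in\C$, the map $\lambda d_1+\mu d_2$ is a derivation (as $\Der(\g)$ is a subspace), so I only need the $\om$-condition. By bilinearity of $\om$, we compute
\begin{align*}
\om\big((\lambda d_1+\mu d_2)(x),y\big)+\om\big(x,(\lambda d_1+\mu d_2)(y)\big)
&=\lambda\big(\om(d_1(x),y)+\om(x,d_1(y))\big)\\
&\quad+\mu\big(\om(d_2(x),y)+\om(x,d_2(y))\big)=0,
\end{align*}
so the combination lies in $\Der_{\om}(\g)$.

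The heart of the argument is closure under the bracket. For $d_1,d_2\in\Der_{\om}(\g)$, set $d=[d_1,d_2]=d_1 d_2-d_2 d_1$; this is already a derivation, so again only the $\om$-condition needs checking. The key computation is to expand $\om(d(x),y)+\om(x,d(y))$ and apply the $\om$-derivation identity for $d_1$ and $d_2$ repeatedly so as to move each operator across the form. Applying the identity for $d_1$ to the pairs $(d_2 x,y)$ and $(x,d_2 y)$, and the identity for $d_2$ to the pairs $(d_1 x,y)$ and $(x,d_1 y)$, one finds that $\om(d_1 d_2 x,y)$ and $\om(d_2 d_1 x,y)$ can each be rewritten as a form involving $y$ acted on once by each operator, and the cross terms cancel in pairs, yielding
$$\om(d(x),y)+\om(x,d(y))=0.$$

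The step I expect to be the only real bookkeeping obstacle is this last cancellation: one must track signs carefully while pushing $d_1$ and $d_2$ across $\om$ in the correct order, since each transposition introduces a sign and the terms combine so that the mixed contributions annihilate one another. It is a short but sign-sensitive calculation rather than a conceptual difficulty, so I would present it by explicitly rewriting $\om(d_1 d_2 x,y)+\om(x,d_1 d_2 y)$ and $\om(d_2 d_1 x,y)+\om(x,d_2 d_1 y)$ and observing that their difference vanishes.
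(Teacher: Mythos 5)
Your proposal is correct and follows essentially the same route as the paper: the subspace property is immediate from bilinearity of $\om$ (the paper treats this as obvious and omits it), and closure under the bracket is exactly the paper's computation, rewriting $\om(d_1d_2x,y)$, $\om(d_2d_1x,y)$, $\om(x,d_1d_2y)$, $\om(x,d_2d_1y)$ via the $\om$-derivation identity and observing the pairwise cancellation. The cancellation you flag as the sign-sensitive step does go through precisely as you describe, so nothing is missing.
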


\begin{proof}
We need only to show that $[d,e]=d\cdot e-e\cdot d\in \Der_{\om}(\g)$ for any $d,e\in\Der_{\om}(\g)$.
Indeed, for any $x,y\in\g$, we have 
\begin{eqnarray*}
\om([d,e](x),y)&=&\om(d\cdot e(x)-e\cdot d(x),y)\\
&=&\om(d\cdot e(x),y)-\om(e\cdot d(x),y)\\
&=&-\om(e(x),d(y))+\om(d(x),e(y))
\end{eqnarray*}
and 
\begin{eqnarray*}
\om(x,[d,e](y))&=&\om(x,d\cdot e(y)-e\cdot d(y))\\
&=&\om(x,d\cdot e(y))-\om(x,e\cdot d(y))\\
&=&-\om(d(x),e(y))+\om(e(x),d(y)).
\end{eqnarray*}
Thus
$\om([d,e](x),y)+\om(x,[d,e](y))=0$, which implies that $[d,e]\in \Der_{\om}(\g)$.
\end{proof}

\section{\scshape Derivations in Dimension 3}

\setcounter{equation}{0}
\renewcommand{\theequation}
{3.\arabic{equation}}
\setcounter{theorem}{0}
\renewcommand{\thetheorem}
{3.\arabic{theorem}}

We first study the derivations of $L_{1}$, where $L_{1}$ has a basis $\{x,y,z\}$ and is defined as in Theorem \ref{t3d} (1). 
We denote by $E_{ij}$ the $n\times n$ matrix in which the $(i,j)$-entry is 1 and other entries are zero.  It is well-known that the
$\{E_{ij}\mid1\leqslant i,j\leqslant n\}$ is a basis for the general linear Lie algebra $\gl_{n}(\C)$.

\begin{prop}\label{pro3.1}
\begin{enumerate}
  \item $\Der(L_{1})$ is a 2-dimensional soluble (but not nilpotent) Lie algebra. 
  \item $\Der_{\om}(L_{1})=\Der(L_{1})$.
\end{enumerate}
\end{prop}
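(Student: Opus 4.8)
The plan is to compute $\Der(L_{1})$ by brute force and then read off both its structure and the $\om$-derivation property. First I would fix the ordered basis $\{x,y,z\}$ and write an arbitrary linear map $d\colon L_{1}\to L_{1}$ as
\[
d(x)=a_{11}x+a_{21}y+a_{31}z,\quad d(y)=a_{12}x+a_{22}y+a_{32}z,\quad d(z)=a_{13}x+a_{23}y+a_{33}z.
\]
Since the derivation identity $d([u,v])=[d(u),v]+[u,d(v)]$ is bilinear, it suffices to impose it on the three defining brackets $[x,y]=y$, $[x,z]=0$, $[y,z]=z$. Expanding each side in the basis and using the multiplication table of $L_{1}$ turns these three vector equations into a homogeneous linear system in the nine entries $a_{ij}$. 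Solving it, I expect all entries to vanish except $a_{31}$ and $a_{33}$, subject to $a_{32}=-a_{31}$; equivalently every derivation has the form $d(x)=c\,z$, $d(y)=-c\,z$, $d(z)=i\,z$. Hence $\Der(L_{1})$ is spanned by
\[
D_{1}=\begin{pmatrix}0&0&0\\0&0&0\\1&-1&0\end{pmatrix},\qquad D_{2}=\begin{pmatrix}0&0&0\\0&0&0\\0&0&1\end{pmatrix},
\]
which is $2$-dimensional, giving the dimension claim in (1).

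For the structural part of (1) I would compute the single nontrivial bracket $[D_{1},D_{2}]=D_{1}D_{2}-D_{2}D_{1}$. A direct matrix multiplication gives $D_{1}D_{2}=0$ and $D_{2}D_{1}=D_{1}$, so $[D_{1},D_{2}]=-D_{1}$; thus $\Der(L_{1})$ is the non-abelian $2$-dimensional Lie algebra. Its derived algebra $[\Der(L_{1}),\Der(L_{1})]$ equals $\C D_{1}$, which is abelian, so the second derived algebra is $0$ and $\Der(L_{1})$ is soluble. For nilpotency I would follow the lower central series: its first term is $\Der(L_{1})$, its second is $\C D_{1}$, and its third is $[\Der(L_{1}),\C D_{1}]$; since $[D_{2},D_{1}]=D_{1}$, this third term is again $\C D_{1}$. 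The series therefore stabilizes at the nonzero ideal $\C D_{1}$ and never reaches $0$, so $\Der(L_{1})$ is not nilpotent.

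The key observation for (2) is that every derivation maps $L_{1}$ into $\C z$. Because $\om(z,x)=\om(z,y)=\om(z,z)=0$, the vector $z$ lies in the radical of the skew form $\om$ (indeed, as $\om$ is a rank-$2$ skew form on a $3$-dimensional space, its radical is exactly $\C z$). Consequently, for any $d\in\Der(L_{1})$ and any $u,v\in L_{1}$ the images $d(u),d(v)$ lie in $\C z$, so both $\om(d(u),v)$ and $\om(u,d(v))$ vanish individually, and a fortiori $\om(d(u),v)+\om(u,d(v))=0$. Hence every derivation is an $\om$-derivation and $\Der_{\om}(L_{1})=\Der(L_{1})$.

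The only real labor is the bookkeeping in solving the nine-variable linear system coming from the derivation identity, where care with the signs produced by the skew bracket (for instance $[y,x]=-y$ and $[z,y]=-z$) is essential. Once the system is solved, both assertions follow with almost no further work: the structure of a $2$-dimensional Lie algebra is determined by a single bracket, and part (2) is automatic from $\im(d)\subseteq\C z\subseteq\operatorname{rad}(\om)$.
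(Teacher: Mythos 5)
Your proposal is correct, and for part (1) it takes essentially the same route as the paper: impose the derivation identity on the three defining brackets $[x,y]=y$, $[x,z]=0$, $[y,z]=z$, solve the resulting homogeneous linear system, and obtain $d(x)=cz$, $d(y)=-cz$, $d(z)=bz$ (this matches the paper's answer exactly, up to your transposed matrix convention, which also explains why your bracket is $[D_1,D_2]=-D_1$ while the paper gets $[E_{13}-E_{23},E_{33}]=E_{13}-E_{23}$; both give the two-dimensional nonabelian algebra). Your solubility/non-nilpotency argument via the derived and lower central series is the same computation the paper performs after identifying $\Der(L_1)$ with the abstract algebra $\g_{2}$ having $[x,y]=y$. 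Where you genuinely differ is part (2): the paper verifies $\om(d(\alpha),\beta)+\om(\alpha,d(\beta))=0$ by a case-by-case check over pairs of basis vectors, whereas you observe once and for all that every derivation has image inside $\C z$, which is precisely the radical of the rank-two form $\om$, so each of the two summands vanishes individually. Your argument is shorter, isolates the real reason the identity holds, and shows more — the conclusion applies to any linear map with image in the radical, not just to derivations — while the paper's check is more pedestrian but requires no observation about the structure of $\om$. Two cosmetic points: your phrase ``all entries vanish except $a_{31}$ and $a_{33}$'' should read ``except $a_{31},a_{32},a_{33}$, subject to $a_{32}=-a_{31}$,'' as your next clause makes clear; and naming a complex scalar $i$ invites confusion with the imaginary unit.
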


\begin{proof} (1)
For any $d\in\Der(L_{1})$, we assume that $d=(a_{ij})\in \gl_{3}(\C)$. Suppose $d(x)=a_{11}x+a_{12}y+a_{13}z, d(y)=a_{21}x+a_{22}y+a_{23}z,$ and $d(z)=a_{31}x+a_{32}y+a_{33}z.$
Since $[x,z]=0$ in $L_{1}$,  $0=d([x,z])=[d(x),z]+[x,d(z)]=[a_{11}x+a_{12}y+a_{13}z,z]+[x,a_{31}x+a_{32}y+a_{33}z]=a_{12}z+a_{32}y$. The last equality follows from the relations that $[y,z]=z$ and $[x,y]=y$ in $L_{1}$. Since $y,z$ are linearly independent, we have $$a_{12}=0=a_{32}.$$
Similarly, $a_{21}x+a_{22}y+a_{23}z=d(y)=d([x,y])=[d(x),y]+[x,d(y)]=[a_{11}x+a_{12}y+a_{13}z,y]+[x,a_{21}x+a_{22}y+a_{23}z]=(a_{11}+a_{22})y-a_{13}z$. Thus $a_{21}x-a_{11}y+(a_{23}+a_{13})z=0$. Since $x,y,z$ are linearly independent, 
$$a_{21}=0=a_{11} \textrm{ and }a_{23}=-a_{13}.$$
Finally, $a_{31}x+a_{32}y+a_{33}z=d(z)=d([y,z])=[d(y),z]+[y,d(z)]=[a_{21}x+a_{22}y+a_{23}z,z]+[y,a_{31}x+a_{32}y+a_{33}z]=
-a_{31}y+(a_{22}+a_{33})z$. Thus
$$a_{31}=0=a_{22}\textrm{ and } a_{31}=-a_{32}=0,$$ and 
$$d=\begin{pmatrix}
      0&  0 & a_{13}\\
      0& 0 &-a_{13}\\
      0&0&a_{33}
\end{pmatrix}$$
where $a_{13},a_{33}\in\C.$ Clearly, $\{E_{13}-E_{23}, E_{33}\}$ is a basis for $\Der(L_{1})$.
Therefore, $\Der(L_{1})$ is a 2-dimensional Lie algebra. 

Note that $[E_{13}-E_{23},E_{33}]=E_{13}-E_{23}$, so $\Der(L_{1})$ is not abelian. Since all 2-dimensional nonabelian Lie algebras are isomorphic to 
\begin{equation}
\label{g2}
\g_{2}: \{x,y\} \textrm{ is a basis with } [x,y]=y
\end{equation}
(see Fulton-Harri \cite[page 135]{FH1991}), we have $\Der(L_{1})\simeq \g_{2}$. 
Since $\g_{2}^{(1)}=[\g_{2},\g_{2}]$ is  the ideal generated by $y$ in $\g_{2}$, $\g_{2}^{(2)}=[\g_{2}^{(1)},\g_{2}^{(1)}]=0$ and so 
$\g_{2}$ is soluble. 
To see that $\g_{2}$ is not nilpotent, we assume by way of contradiction that there exists some 
$m\in\N^{+}$ such that $\g_{2}^{m+1}=[\g_{2}^{m},\g_{2}]=0$. However, since $[x,y]=y$, the element 
 $[\cdots[y,\underbrace{x],x],\cdots,x]}_{m}\in\g_{2}^{m+1}$ is not zero, contradiction.
 
 (2) For any $d\in\Der(L_{1})$, since $\om$ is bilinear, we need only to show that 
 \begin{equation}
\label{E3-2}
\om(d(\alphaup),\betaup)+\om(\alphaup,d(\betaup))=0
\end{equation}
 for $\alphaup,\betaup\in\{x,y,z\}$. If $\alphaup=\betaup$, then Equation (\ref{E3-2}) holds from the fact that $\om$ is anti-symmetric. So we may assume that $\alphaup\neq \betaup$. 
 
 Subcase 1: $(\alphaup,\betaup)=(x,y)$ or $(y,x)$. Note that $\om(y,z)=0=\om(x,z)$, so
 $\om(d(x),y)+\om(x,d(y))=\om(a_{13}z,y)+\om(x,-a_{13}z)=
 a_{13}\om(z,y)-a_{13}\om(x,z)=0$.
 
  Subcase 2: $(\alphaup,\betaup)=(x,z)$ or $(z,x)$. In this case,  $\om(d(x),z)+\om(x,d(z))=\om(a_{13}z,z)+\om(x,a_{33}z)=a_{13}\om(z,z)+a_{33}\om(x,z)=0.$
  
 Subcase 3: $(\alphaup,\betaup)=(y,z)$ or $(z,y)$. Similarly, $\om(d(y),z)+\om(y,d(z))=\om(-a_{13}z,z)+\om(y,a_{33}z)=0.$

Therefore, Equation (\ref{E3-2}) follows, completing the proof of the second assertion.
\end{proof}

Similar arguments can be applied to the remaining 3-dimensional $\om$-Lie algebras, so we summarize the result in the following Table \ref{3d}, without detailed proofs. Note that in Table \ref{3d}, $\g_{1}$ is the unique 1-dimensional Lie algebra, 
$\g_{2}$ is defined as Equation (\ref{g2}), and $\mathfrak{sl}_{2}(\C)$ denotes the
3-dimensional simple Lie algebra.

\begin{center}
\footnotesize{
\begin{longtable}{c|c|c|c|c}

\caption[Derivations of 3-dimensional $\om$-Lie algebras $\g$]{{\rm Derivations of 3-dimensional $\om$-Lie algebras $\g$}} \label{3d} \\
 \hline 
\endfirsthead

\multicolumn{5}{c}%
{\footnotesize  \tablename\ \thetable{}-- continued from previous page} \\
\hline
\endhead

\hline \multicolumn{5}{r}{{Continued on next page}} \\ \hline
\endfoot

\hline
\endlastfoot

$\g$ & Elements in $\Der(\g)$ & $\Der_{\om}(\g)=\Der(\g)$ &  Properties of $\Der(\g)$ & Remarks  \\ \hline
 
 $L_{1}$ &  $\begin{pmatrix}
    0  &  0&a  \\
    0  &  0& -a\\
    0&0&b
\end{pmatrix},a,b\in\C$ & True &  Soluble (not nilpotent) & $(\simeq) ~~\g_{2}$  \\ \hline

$L_{2}$ &  $\begin{pmatrix}
    a  &  0&0  \\
    0  &  0& 0\\
    0&0&-a
\end{pmatrix},a\in\C$ &True &  Abelian & $(\simeq) ~~\g_{1}$  \\ \hline

$ \begin{array}{c }
      A_{\alphaup}    \\
       (\alphaup\in\C) 
\end{array}$& $\begin{pmatrix}
    0  &  0&0  \\
    a  &  0& 0\\
    a/2&a&0
\end{pmatrix},a\in\C$ &True &  Abelian & $(\simeq) ~~\g_{1}$  \\ \hline

$B$ & $\begin{pmatrix}
    0  &  0&0  \\
    0  &  0& 0\\
    0&a&0
\end{pmatrix},a\in\C$ &True &  Abelian & $(\simeq) ~~\g_{1}$  \\ \hline

$ \begin{array}{c}
      C_{\alphaup}    \\
       (\alphaup\in\C-\{1,0,-1\}) 
\end{array}$ & $\begin{pmatrix}
    0  &  0&0  \\
    0  &  a& 0\\
    0&0&-a
\end{pmatrix},a\in\C$ &True &  Abelian & $(\simeq) ~~\g_{1}$  \\ \hline

$C_{1}  $ & $\begin{pmatrix}
    0  &  0&0  \\
    0  &  a& c\\
    0&b&-a
\end{pmatrix},a,b,c\in\C$ &True & Simple  & $(\simeq) ~~\mathfrak{sl}_{2}(\C)$  \\ \hline
\end{longtable}
}\end{center}

\section{\scshape Automorphisms in Dimension 3}

\setcounter{equation}{0}
\renewcommand{\theequation}
{4.\arabic{equation}}
\setcounter{theorem}{0}
\renewcommand{\thetheorem}
{4.\arabic{theorem}}

In this section we calculate automorphisms of $3$-dimensional $\om$-Lie algebras. We only give the detailed proofs for the cases of 
$L_{1}$ and $A_{\alpha}$, omitting the proofs for the remaining cases because of similarity of arguments. 

We begin with the computations of $\Aut(L_{1})$. 

For any $\sigmaup\in\Aut(L_{1})$, we assume that 
$\sigmaup=(a_{ij})\in\gl_{3}(\C)$ with $\det(\sigmaup)\neq 0.$
Then $\sigmaup(x)=a_{11}x+a_{12}y+a_{13}z,\sigmaup(y)=a_{21}x+a_{22}y+a_{23}z$ and 
$\sigmaup(z)=a_{31}x+a_{32}y+a_{33}z$. Since $[x,z]=0$, we have 
$0=\sigmaup([x,z])=[\sigmaup(x),\sigmaup(z)]=[a_{11}x+a_{12}y+a_{13}z,a_{31}x+a_{32}y+a_{33}z]=
(a_{11}a_{32}-a_{12}a_{31})y+(a_{12}a_{33}-a_{13}a_{32})z$. Thus 
$$a_{11}a_{32}=a_{12}a_{31} \textrm{ and } a_{12}a_{33}=a_{13}a_{32}.$$
Since $[x,y]=y$, we expand the equation $\sigmaup(y)=[\sigmaup(x),\sigmaup(y)]$ and derive
$$a_{21}=0,a_{22}=a_{11}a_{22}-a_{12}a_{21} \textrm{ and  } a_{23}=a_{12}a_{23}-a_{13}a_{22}.$$
Similarly, since $[y,z]=z$, we have $[\sigmaup(y),\sigmaup(z)]=\sigmaup(z)$. 
Expanding this equation we get
$$a_{31}=0,a_{32}=a_{21}a_{32}-a_{22}a_{31} \textrm{ and } a_{33}=a_{22}a_{33}-a_{23}a_{32}.$$
A direct computation shows that 
\begin{equation}
\label{ }
\sigmaup=\begin{pmatrix}
  1    & 0&a_{13}   \\
    0  & 1& -a_{13}\\
    0&0&a_{33} 
\end{pmatrix}, 
\end{equation}
where $a_{33}\neq 0$. This completes the characterization of arbitrary element in $\Aut(L_{1})$.

To describe $\Aut_{\om}(L_{1})$, we suppose $\sigmaup=\begin{pmatrix}
  1    & 0&a   \\
    0  & 1& -a\\
    0&0&b
\end{pmatrix}\in \Aut(L_{1})$. Then $\sigmaup(x)=x+az,\sigmaup(y)=y-az$ and
$\sigmaup(z)=bz$. Recall that $\om(x,z)=0=\om(y,z)$ and $\om(x,y)=1$, we see
that for any $u,v\in L_{1}$,
$$\om(u,v)=\om(\sigmaup(u),\sigmaup(v))$$
which shows that $\Aut_{\om}(L_{1})=\Aut(L_{1})$. Moreover, we can prove

\begin{prop}\label{pro4.1}
\begin{enumerate}
  \item $\Aut(L_{1})$ is a matrix Lie group.
  \item $\Aut(L_{1})=\exp(\Der(L_{1}))$, where $\exp(-)$ denotes the matrix exponential.
  \item $\Aut(L_{1})$ is a connected Lie group, i.e., path-connected as a topological space.
  \item $\Aut(L_{1})$ is a soluble Lie group.
\end{enumerate}
 \end{prop}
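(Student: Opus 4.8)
The plan is to leverage the explicit description of $\Aut(L_{1})$ just obtained together with the computation of $\Der(L_{1})$ from Proposition \ref{pro3.1}, and to establish the four assertions in the order (1), (2), (4), (3); here (2) does the real work, while (3) and (4) fall out as corollaries once the equality $\Aut(L_{1})=\exp(\Der(L_{1}))$ is in hand.

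For (1) I would observe that the conditions cutting out $\Aut(L_{1})$ inside $\GL_{3}(\C)$, namely $a_{11}=a_{22}=1$, $a_{12}=a_{21}=a_{31}=a_{32}=0$ and $a_{23}=-a_{13}$, are all polynomial (in fact linear) equations in the matrix entries. Hence $\Aut(L_{1})$ is the intersection of $\GL_{3}(\C)$ with a closed linear subvariety, so it is a closed subgroup of $\GL_{3}(\C)$ and therefore a matrix Lie group; alternatively this is the specialization of the general fact cited from Sagle-Walde \cite[Proposition 7.1]{SW1973}.

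The heart of the argument is (2), which I would prove by computing the matrix exponential of a general derivation $d=a(E_{13}-E_{23})+cE_{33}\in\Der(L_{1})$ directly. Writing $N:=E_{13}-E_{23}$, a short computation gives $N^{2}=0$, $NE_{33}=N$ and $E_{33}N=0$, from which one proves by induction that $d^{n}=ac^{n-1}N+c^{n}E_{33}$ for every $n\geqslant 1$. Summing the exponential series then yields
\[
\exp(d)=\begin{pmatrix} 1 & 0 & a(e^{c}-1)/c \\ 0 & 1 & -a(e^{c}-1)/c \\ 0 & 0 & e^{c}\end{pmatrix},
\]
where the $(1,3)$- and $(2,3)$-entries are read as $a$ and $-a$ when $c=0$. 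Since $e^{c}\neq 0$, this already lies in $\Aut(L_{1})$, giving $\exp(\Der(L_{1}))\subseteq\Aut(L_{1})$. For the reverse inclusion I would take an arbitrary $\sigmaup\in\Aut(L_{1})$ with $(1,3)$-entry $\alphaup$ and $(3,3)$-entry $\betaup\neq 0$ and solve $e^{c}=\betaup$ together with $a(e^{c}-1)/c=\alphaup$ for $a,c\in\C$: because $\betaup\neq 0$, a complex logarithm $c$ with $e^{c}=\betaup$ exists, and one then chooses $a$ accordingly. I expect this surjectivity step to be the main obstacle, precisely because of the $c=0$ versus $c\neq 0$ bookkeeping in the factor $(e^{c}-1)/c$; the two cases should be handled separately, taking $c=0$ and $a=\alphaup$ when $\betaup=1$, and $c\neq 0$ with $a=\alphaup c/(\betaup-1)$ when $\betaup\neq 1$.

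Finally, (3) and (4) are immediate consequences of (2). For (3), the path $t\mapsto\exp(td)$ for $t\in[0,1]$ joins the identity to any prescribed $\sigmaup=\exp(d)$ while remaining inside $\Aut(L_{1})$, so $\Aut(L_{1})$ is path-connected. For (4), I would invoke the standard correspondence that a connected Lie group is soluble exactly when its Lie algebra is soluble; since the Lie algebra of $\Aut(L_{1})$ is $\Der(L_{1})\simeq\g_{2}$, shown soluble in Proposition \ref{pro3.1}, and $\Aut(L_{1})$ is connected by (3), it follows that $\Aut(L_{1})$ is a soluble Lie group.
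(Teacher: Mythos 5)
Your proposal is correct, and on parts (1), (2) and (4) it is essentially the paper's own argument: the same closedness appeal for (1); for (2) your case split $\beta=1$ versus $\beta\neq1$ with $a=\alpha c/(\beta-1)$ is exactly the paper's Subcase 2 / Subcase 1 construction, where the paper picks $b_{0}$ with $e^{b_{0}}=b$ and sets $a_{0}=ab_{0}/(e^{b_{0}}-1)$; and for (4) the same connected-plus-soluble-Lie-algebra criterion, which the paper quotes from \cite[Theorem 10.9(b)]{SW1973}. The only organizational difference in (2) is that you compute $\exp(d)$ once and for all from the relations $N^{2}=0$, $NE_{33}=N$, $E_{33}N=0$ and then solve for $(a,c)$, while the paper constructs $d$ from the target $\sigmaup$ and verifies $\exp(d)=\sigmaup$ via $d^{k}=b_{0}^{k-1}d$; this is the same computation run in opposite directions. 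One small caution: for the inclusion $\exp(\Der(L_{1}))\subseteq\Aut(L_{1})$ the paper cites \cite[Proposition 7.3(a)]{SW1973} (the exponential of a derivation is an automorphism), whereas you infer membership from the matrix shape alone; that tacitly uses the sufficiency half of the matrix characterization of $\Aut(L_{1})$ (every matrix of that shape \emph{is} an automorphism), which the computation preceding the proposition establishes only as a necessary condition, so you should either add the routine verification on basis brackets or quote the cited fact. Where you genuinely depart from the paper is (3): the paper invokes analytic subgroups (\cite[Definition 3.12 and Proposition 3.13]{Hal2000}), while you obtain path-connectedness directly from (2) via the paths $t\mapsto\exp(td)$, i.e.\ $\Aut(L_{1})$ is the continuous image of the path-connected space $\Der(L_{1})$. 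Proving (2) before (3), as you do, buys a more elementary and self-contained argument that avoids the analytic-subgroup machinery altogether.
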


\begin{proof} 
(1)  We have seen that  $\Aut(L_{1})$ is a matrix Lie group because it is closed in the general linear group
$\GL(3,\C)$, see Sagle-Walde \cite[Proposition 7.1]{SW1973}.
 
(2) Recall that the matrix exponential  exp$:\gl_{3}(\C)\ra \GL(3,\C)$ was given by 
$$X\mapsto \sum_{k=0}^{\infty}\frac{X^{k}}{k!},$$
for any $3\times 3$ complex matrix $X$. By Sagle-Walde \cite[Proposition 7.3 (a)]{SW1973}, we have $\exp(tX)\in \Aut(L_{1})$ for any $t\in \mathbb{R}$ and any derivation $X\in \Der(L_{1})$. 
Thus the exponential mapping $\exp:\Der(L_{1})\ra \Aut(L_{1})$ does make sense, i.e., the image $\exp(\Der(L_{1}))\subseteq \Aut(L_{1})$ as a subset. To show $\exp(\Der(L_{1}))=\Aut(L_{1})$, we need only to show that 
$$\exp(\Der(L_{1}))\supseteq\Aut(L_{1}),$$
which means that  for any $\sigmaup=\begin{pmatrix}
  1    & 0&a   \\
    0  & 1& -a\\
    0&0&b
\end{pmatrix}\in\Aut(L_{1})$, it is sufficient to show that there exists some derivation $d\in\Der(L_{1})$ such that $\exp(d)=\sigmaup.$

\textsc{Subcase 1}: $b\neq 1$. 
Since nonzero number $b\neq 1$, it follows from elementary analysis that $e^{x}=b$ has a nonzero solution in $\C$, i.e., there exists a complex number $0\neq b_{0}\in\C$ such that $e^{b_{0}}=b.$ We define 
$$a_{0}:=\frac{a\cdot b_{0}}{e^{b_{0}}-1}$$
and consider the derivation  $$d:=\begin{pmatrix}
  0    & 0&a_{0}   \\
    0  & 0& -a_{0}\\
    0&0&b_{0}
\end{pmatrix}.$$
We make the convention that $d^{0}:=I_{3}$, the identity matrix.
By induction on $k$, we can show  that $d^{k}=b_{0}^{k-1}\cdot d$ for all $k\in\N^{+}$. Thus
\begin{eqnarray*}
\exp(d) & = & I_{3}+d/1!+d^{2}/2!+d^{3}/3!+\cdots \\
 & = & I_{3}+d(1+b_{0}/2!+b_{0}^{2}/3!+\cdots) \\
  & = & I_{3}+\frac{d}{b_{0}}(b_{0}+b_{0}^{2}/2!+b_{0}^{3}/3!+\cdots) \\
   & = & I_{3}+\frac{d}{b_{0}}(-1+1+b_{0}+b_{0}^{2}/2!+b_{0}^{3}/3!+\cdots) \\
     & = & I_{3}+\frac{d}{b_{0}}(e^{b_{0}}-1) \\
     &=&\begin{pmatrix}
  1    & 0&0 \\
    0  & 1& 0\\
    0&0&1
\end{pmatrix}+\begin{pmatrix}
  0    & 0&a_{0}  \cdot\frac{e^{b_{0}}-1}{b_{0}} \\
    0  & 0& -a_{0}\cdot\frac{e^{b_{0}}-1}{b_{0}}\\
    0&0&e^{b_{0}}-1
\end{pmatrix}\\
&=&\begin{pmatrix}
  1    & 0&a   \\
    0  & 1& -a\\
    0&0&b
\end{pmatrix}=\sigmaup.
\end{eqnarray*}

\textsc{Subcase 2}: $b=1$.  In this case we define
$$d:=\begin{pmatrix}
  0    & 0&a   \\
    0  & 0& -a\\
    0&0&0
\end{pmatrix}.$$
Then $d$ is nilpotent since $d^{2}=0$. 
Thus
$$\exp(d)=\begin{pmatrix}
  1    & 0&a \\
    0  & 1& -a\\
    0&0&1
\end{pmatrix}=\sigmaup.$$ 

The proof of the second assertion is completed.

(3) To see that  $\Aut(L_{1})$ is connected, we first note that the Lie algebra of $\Aut(L_{1})$ is just $\Der(L_{1})$, see
 \cite[Proposition 7.3 (b)]{SW1973}. Since $\GL(3,\C)$ is a matrix Lie group with $\gl_{3}(\C)$ as its Lie algebra, 
   $\Aut(L_{1})$ is an analytic subgroup of $\GL(3,\C)$, according to \cite[Definition 3.12]{Hal2000}, the definition of analytic 
   subgroup. It follows from \cite[Proposition 3.13]{Hal2000} that $\Aut(L_{1})$ is path-connected, i.e., 
 $\Aut(L_{1})$ is a connected Lie group.
 
 (4) Since $\Aut(L_{1})$ is  connected and its Lie algebra is soluble, it follows from  Sagle-Walde 
 \cite[Theorem 10.9 (b)]{SW1973} that $\Aut(L_{1})$ is soluble. 
\end{proof}

\begin{rem} {\rm
We remark that there also exists an approach to compute the exponential of any matrix, using the SN-Decomposition, see Hall \cite[Sections 2.2.1-2.2.3]{Hal2000} for the details.
}\end{rem}

Similar arguments can be applied to the cases of $L_{2},B,$ and $C_{\alphaup}$. We will see below that 
the computation of $\Aut(A_{\alphaup})$ is more complicated. Here a ``trick'' coming from Linear Algebra will be useful. 

Suppose $\sigmaup=(a_{ij})_{3\times 3}\in\Aut(A_{\alphaup})$
 and $\sigmaup(x)=a_{11}x+a_{12}y+a_{13}z,\sigmaup(y)=a_{21}x+a_{22}y+a_{23}z$ and 
$\sigmaup(z)=a_{31}x+a_{32}y+a_{33}z$.
Since $[x,y]=x$, $\sigmaup(x)=\sigmaup([x,y])=[\sigmaup(x),\sigmaup(y)]$. Note that the relations $[y,z]=z+\alphaup x$ and $[x,z]=x+y$. It follows that 
\begin{eqnarray*}
a_{11}x+a_{12}y+a_{13}z & = & [a_{11}x+a_{12}y+a_{13}z,a_{21}x+a_{22}y+a_{23}z] \\
 & = & (a_{11}a_{22}+a_{11}a_{23}-a_{12}a_{21}+\alphaup a_{12}a_{23}-a_{13}a_{21}-\alphaup a_{13}a_{22})x+\\
 &&(a_{11}a_{23}-a_{13}a_{21})y+(a_{12}a_{23}-a_{13}a_{22})z.
\end{eqnarray*}
The algebraic independence of $x,y,z$ implies that 
\begin{eqnarray}
a_{11} & = & a_{11}a_{22}+a_{11}a_{23}-a_{12}a_{21}+\alphaup a_{12}a_{23}-a_{13}a_{21}-\alphaup a_{13}a_{22}\\
&=&A_{33}+A_{32}+\alphaup A_{31} \nonumber\\
a_{12} & = & a_{11}a_{23}-a_{13}a_{21}=A_{32}\\
a_{13}&=&a_{12}a_{23}-a_{13}a_{22}=A_{31}.
\end{eqnarray}
where $A_{ij}$ denotes the minor determinant obtained by deleting 
the $i$-th row and $j$-th column in $\det(\sigmaup)$.

Similarly,  it follows from $[\sigmaup(x),\sigmaup(z)]=\sigmaup(x+y)$ that 
\begin{eqnarray}
a_{11}+a_{21} & = & a_{11}a_{32}+a_{11}a_{33}-a_{12}a_{31}+\alphaup a_{12}a_{33}-a_{13}a_{31}-\alphaup a_{13}a_{32} \\
&=&A_{23}+A_{22}+\alphaup A_{21}\nonumber\\
a_{12}+a_{22} & = & a_{11}a_{33}-a_{13}a_{31}=A_{22}\\
a_{13}+a_{23}&=&a_{12}a_{33}-a_{13}a_{32}=A_{21}.
\end{eqnarray}
We expand $\sigmaup([y,z])=\sigmaup(z+\alphaup x)$ to get
\begin{eqnarray}
a_{31}+\alphaup a_{11} & = & a_{21}a_{32}+a_{21}a_{33}-a_{22}a_{31}+\alphaup a_{22}a_{33}-a_{23}a_{31}-\alphaup a_{23}a_{32} \\
&=& A_{13}+A_{12}+\alphaup A_{11}\nonumber\\
a_{32}+\alphaup a_{12} & = & a_{21}a_{33}-a_{23}a_{31}=A_{12}\\
a_{33}+\alphaup a_{13}&=& a_{22}a_{33}-a_{23}a_{32}=A_{11}.
\end{eqnarray}

We denote by $\sigmaup^{*}$  the matrix for which  $\sigmaup\cdot \sigmaup^{*}=\det(\sigmaup)\cdot I_{3}=\sigmaup^{*}\cdot \sigmaup$. Then
\begin{eqnarray*}
\begin{pmatrix}
    \det(\sigmaup)  &    0& 0\\
   0   &  \det(\sigmaup)&0\\
   0&0&\det(\sigmaup)
\end{pmatrix} & = & \begin{pmatrix}
     a_{11} & a_{12} & a_{13}    \\
           a_{21} & a_{22} & a_{23}    \\
     a_{31} & a_{32} & a_{33}      
\end{pmatrix}\cdot \begin{pmatrix}
     A_{11} & -A_{21} & A_{31}    \\
           -A_{12} & A_{22} & -A_{32}    \\
     A_{13} & -A_{23} & A_{33}      
\end{pmatrix} \\
 & = & \begin{pmatrix}
     a_{11} & a_{12} & a_{13}    \\
           a_{21} & a_{22} & a_{23}    \\
     a_{31} & a_{32} & a_{33}      
\end{pmatrix}\cdot \begin{pmatrix}
     a_{33}+\alpha a_{13} & -(a_{13}+a_{23}) & a_{13}    \\
           -(a_{32}+\alpha a_{12}) & a_{12}+a_{22} & -a_{12}    \\
      A_{13}& -A_{23} & a_{11}-a_{12}-\alpha a_{13}      
\end{pmatrix} 
\end{eqnarray*}
where $A_{13}=a_{31}+\alpha a_{11}-a_{32}-\alpha a_{12}-\alpha(a_{33}+\alpha a_{13})$
and $A_{23}=a_{11}+a_{21}-a_{12}-a_{22}-\alpha a_{13}-\alpha a_{23}$.
This equality gives rise to 9 equations involving 9 unknowns: $\{a_{ij} \mid 1\leqslant i,j\leqslant 3\}$. It follows from these 9 equations that 
\begin{eqnarray}
a_{11}a_{33} & = & \det(\sigmaup) \\
a_{21}a_{33}-a_{22}a_{32} & = & 0\\
a_{22}^{2}&=& \det(\sigmaup)\\
2a_{31}a_{33}-a_{32}^{2}+\alphaup a_{11}a_{33}-a_{32}a_{33}-\alphaup a_{33}^{2}&=&0\\
a_{22}a_{32}-a_{21}a_{33}+a_{22}a_{33}&=&\det(\sigmaup)\\
a_{13}=a_{12}=a_{23}&=&0.
\end{eqnarray}
Note that $\det(\sigmaup)\neq 0$, so $a_{11},a_{22},a_{33}$ are not zero.
A direct calculation shows that 
$a_{11}=a_{22}=a_{33}=1, a_{21}=a_{32},$ and $2a_{31}-a_{32}^{2}-a_{32}=0.$
Therefore,
$$\sigmaup=\begin{pmatrix}
     1 &0&0    \\
     a_{21} &1&0\\
     (a_{21}^{2}+a_{21})/2&a_{21}&1  
\end{pmatrix}$$
which is exactly what we want. Furthermore, from the generating relations of $A_{\alphaup}$ we observe that 
$$\Aut_{\om}(A_{\alphaup})=\Aut(A_{\alphaup}).$$ With a similar argument as Proposition \ref{pro4.1} we obtain

\begin{prop}\label{pro4.3}
\begin{enumerate}
  \item $\Aut(A_{\alphaup})$ is a connected, abelian matrix Lie group.
  \item $\Aut(A_{\alphaup})=\exp(\Der(A_{\alphaup}))$.
\end{enumerate}
 \end{prop}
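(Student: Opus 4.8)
The plan is to exploit the explicit one-parameter descriptions of both $\Aut(A_{\alphaup})$ and $\Der(A_{\alphaup})$ that are already in hand, so that the entire proposition reduces to two short matrix computations. Writing $\sigmaup_{s}$ for the automorphism obtained by setting $a_{21}=s$, the calculation preceding the proposition shows that every element of $\Aut(A_{\alphaup})$ equals
$$\sigmaup_{s}=\begin{pmatrix} 1&0&0\\ s&1&0\\ (s^{2}+s)/2&s&1\end{pmatrix},\qquad s\in\C,$$
while from Table \ref{3d} every element of $\Der(A_{\alphaup})$ equals
$$d_{a}=\begin{pmatrix} 0&0&0\\ a&0&0\\ a/2&a&0\end{pmatrix},\qquad a\in\C.$$
Thus both objects are parametrised by a single complex number, which is the structural observation driving everything below.

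For part (1), $\Aut(A_{\alphaup})$ is a matrix Lie group because it is closed in $\GL(3,\C)$, by Sagle-Walde \cite[Proposition 7.1]{SW1973}, exactly as in Proposition \ref{pro4.1}(1). To obtain the abelian and connected properties together, I would compute the group law in the parameter $s$: multiplying out $\sigmaup_{s}\cdot\sigmaup_{t}$ and checking that the quadratic $(3,1)$-entry assembles into $((s+t)^{2}+(s+t))/2$, one finds
$$\sigmaup_{s}\cdot\sigmaup_{t}=\sigmaup_{s+t}=\sigmaup_{t}\cdot\sigmaup_{s}.$$
Hence $s\mapsto\sigmaup_{s}$ is a group isomorphism from the additive group $(\C,+)$ onto $\Aut(A_{\alphaup})$. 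Since $(\C,+)$ is abelian and path-connected and this map is continuous (indeed polynomial) in $s$, it follows at once that $\Aut(A_{\alphaup})$ is a connected abelian Lie group. Connectedness could alternatively be deduced from the analytic-subgroup argument used in Proposition \ref{pro4.1}(3) via Hall \cite[Proposition 3.13]{Hal2000}, but the explicit product law makes it immediate.

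For part (2), I would compute $\exp(d_{a})$ directly. The matrix $d_{a}$ is nilpotent: a short calculation gives $d_{a}^{2}=a^{2}E_{31}$ and $d_{a}^{3}=0$, so the exponential series terminates after three terms,
$$\exp(d_{a})=I_{3}+d_{a}+\tfrac{1}{2}d_{a}^{2}.$$
Summing these three matrices yields precisely $\sigmaup_{a}$, since the $(3,1)$-entry becomes $a/2+a^{2}/2=(a^{2}+a)/2$. Therefore $\exp\colon\Der(A_{\alphaup})\ra\Aut(A_{\alphaup})$, $d_{a}\mapsto\sigmaup_{a}$, is a bijection, and because every automorphism is some $\sigmaup_{a}$ we conclude $\exp(\Der(A_{\alphaup}))=\Aut(A_{\alphaup})$.

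There is no serious obstacle here; both assertions are elementary once the normal forms are available. The only point requiring care is the quadratic $(3,1)$-entry $(s^{2}+s)/2$: it is exactly this term that makes the product law additive in part (1) and that makes the truncated exponential land on $\sigmaup_{a}$ in part (2). Consequently the cross term $st$ arising in $\sigmaup_{s}\sigmaup_{t}$ and the $\tfrac{1}{2}d_{a}^{2}$ contribution must be tracked accurately; once that bookkeeping is done, both parts follow immediately.
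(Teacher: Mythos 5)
Your proof is correct, and for part (2) it is in substance identical to the paper's argument: the paper likewise observes $d^{2}=a^{2}E_{31}$ and $d^{3}=0$, truncates the exponential series to $\exp(d)=I_{3}+d+\tfrac{1}{2}d^{2}$, and checks that this reproduces the general automorphism, so that surjectivity of $\exp$ follows from the normal form. The genuine difference is in part (1). The paper follows the template of Proposition \ref{pro4.1}: matrix Lie group by closedness in $\GL(3,\C)$, connectedness via the analytic-subgroup argument (the Lie algebra of $\Aut(A_{\alphaup})$ is $\Der(A_{\alphaup})$, then \cite[Proposition 3.13]{Hal2000}), and abelianness by directly checking $\sigmaup\cdot\sigmaup'=\sigmaup'\cdot\sigmaup$ on the normal forms. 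You instead prove the sharper identity $\sigmaup_{s}\sigmaup_{t}=\sigmaup_{s+t}$, exhibiting $s\mapsto\sigmaup_{s}$ as a continuous group isomorphism $(\C,+)\to\Aut(A_{\alphaup})$; this yields commutativity and path-connectedness simultaneously, with no Lie-theoretic input beyond closedness. Your route is more elementary and self-contained, and it records the extra structural fact $\Aut(A_{\alphaup})\cong(\C,+)$, which also explains a posteriori why the (truncated) exponential is a bijection rather than merely a surjection. What the paper's route buys in exchange is uniformity: the analytic-subgroup argument for connectedness applies verbatim to the other $3$-dimensional cases such as $L_{1}$, where the automorphism group is not a one-parameter group and no such additive parametrization exists.
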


\begin{proof}
(1) The proof is similar to Proposition \ref{pro4.1}. It is immediate to see that 
$\Aut(A_{\alphaup})$ is a connected matrix Lie group.  For any elements $\sigmaup,\sigmaup'\in\Aut(A_{\alphaup})$
we see that  $\sigmaup\cdot\sigmaup'= \sigmaup'\cdot\sigmaup.$ Thus
$\Aut(A_{\alphaup})$ is abelian. 

(2) Since $\Aut(A_{\alphaup})\supseteq\exp(\Der(A_{\alphaup}))$, we need only to show that 
$\Aut(A_{\alphaup})\subseteq\exp(\Der(A_{\alphaup}))$. For any derivation
$$d=\begin{pmatrix}
    0  &  0&0  \\
    a  &  0& 0\\
    a/2&a&0
\end{pmatrix}\in\Der(A_{\alphaup}),$$
it follows that $$d^{2}=\begin{pmatrix}
    0  &  0&0  \\
    0  &  0& 0\\
    a^{2}&0&0
\end{pmatrix}$$
and $d^{3}=0$, so $d$ is nilpotent. Thus 
$$\exp(d)=I_{3}+d+\frac{d^{2}}{2}=\begin{pmatrix}
    1  &  0&0  \\
    a  &  1& 0\\
    (a^{2}+a)/2&a&1
\end{pmatrix}$$
which means that we can find a derivation $d\in \Der(A_{\alphaup})$ such that 
$\exp(d)=\sigmaup$ for any automorphism $\sigmaup\in\Aut(A_{\alphaup}).$
 \end{proof}

We summarize a characterization on automorphisms of 3-dimensional $\om$-Lie algebras as following Table \ref{3a}.

\begin{center}
\footnotesize{
\begin{longtable}{c|c|c|c|c}

\caption[Automorphisms of 3-dimensional $\om$-Lie algebras $\g$]{{\rm Automorphisms of 3-dimensional $\om$-Lie algebras $\g$}} \label{3a} \\
 \hline 
\endfirsthead

\multicolumn{5}{c}%
{\footnotesize  \tablename\ \thetable{}-- continued from previous page} \\
\hline
\endhead

\hline \multicolumn{5}{r}{{Continued on next page}} \\ \hline
\endfoot

\hline
\endlastfoot    

 $\g$ & Elements in $\Aut(\g)$ & $\Aut_{\om}(\g)=\Aut(\g)$ &  Properties of $\Aut(\g)$ & Unipotent?  \\ \hline
 
 $L_{1}$ &  $\begin{pmatrix}
    1  &  0&a  \\
    0  &  1& -a\\
    0&0&b
\end{pmatrix},b\neq 0,a\in\C$ & True &  Soluble & False  \\ \hline

$L_{2}$ &  $\begin{pmatrix}
    a  &  0&0  \\
    0  &  1& 0\\
    0&0&1/a
\end{pmatrix},0\neq a\in\C$ &True &  Abelian & False\\ \hline

$ \begin{array}{c }
      A_{\alphaup}    \\
       (\alphaup\in\C) 
\end{array}$& $\begin{pmatrix}
    1  &  0&0  \\
    a  &  1& 0\\
    (a^{2}+a)/2&a&1
\end{pmatrix},a\in\C$ &True &  Abelian & True  \\ \hline

$B$ & $\begin{pmatrix}
    1 &  0&0  \\
    0  &  a& 0\\
    0&b&a
\end{pmatrix},a^{2}=1,b\in\C$ &True &  Abelian & True \\ \hline

$ \begin{array}{c}
      C_{\alphaup}    \\
       (\alphaup\in\C-\{1,0,-1\}) 
\end{array}$ & $\begin{pmatrix}
    1  &  0&0  \\
    0  &  a& 0\\
    0&0&1/a
\end{pmatrix},0\neq a\in\C$ &True &  Abelian & False  \\ \hline

$C_{1}  $ & $ \begin{array}{c}
      \begin{pmatrix}
    1  &  0&0  \\
    0  &  a& c\\
    0&d&b
\end{pmatrix}, \\
ab-cd=1
\end{array}$ &True & $(\simeq) ~~\SL_{2}(\C)$  & False   \\ \hline

\end{longtable}
}\end{center}

By Propositions \ref{pro4.1} (2) and \ref{pro4.3} (2), one might ask whether $\exp(\Der(\g))=\Aut(\g)$ always holds for all 3-dimensional $\om$-Lie algebra $\g$? Unfortunately, the answer to this question is ``no'' in general, as the following example shows.

\begin{exam}{\rm
Consider $\exp(\Der(B))$ and $\Aut(B)$. For any $d\in \Der(B)$, we may take 
$$d=\begin{pmatrix}
      0&0&0    \\
      0&0&0\\
      0&a&0  
\end{pmatrix}.$$
Then $d^{2}=0$ is nilpotent. Thus $\exp(d)=d^{0}+d^{1}/1!=I_{3}+d=\begin{pmatrix}
      1&0&0    \\
      0&1&0\\
      0&a&1
\end{pmatrix}.$ On the other hand, 
$$\sigmaup:=\begin{pmatrix}
      1&0&0    \\
      0&-1&0\\
      0&a&-1 
\end{pmatrix}\in \Aut(B)$$
but $\sigmaup$ doesn't belong to $\exp(\Der(B))$. Hence $\exp(\Der(\g))\subset\Aut(\g)$ but $\exp(\Der(\g))\neq\Aut(\g)$.
}
 \end{exam}

\section{\scshape Derivations and Automorphisms in Dimension 4}

\setcounter{equation}{0}
\renewcommand{\theequation}
{5.\arabic{equation}}
\setcounter{theorem}{0}
\renewcommand{\thetheorem}
{5.\arabic{theorem}}

Suppose $\g$ is a 4-dimensional nontrivial $\om$-Lie algebra over $\C$ and 
$\{x,y,z,e\}$ is a basis for $\g.$ The characterizations of $\Der(\g)$ and $\Aut(\g)$ are summarized in the following 
Tables \ref{4d} and \ref{4a} respectively. Note that all parameters in Tables \ref{4d} and \ref{4a} belong to $\C$.

\begin{center}
\footnotesize{
\begin{longtable}{c|c|c|c}

\caption[Derivations of 4-dimensional $\om$-Lie algebras $\g$]{{\rm Derivations of 4-dimensional $\om$-Lie algebras $\g$}} \label{4d} \\
 \hline 
\endfirsthead

\multicolumn{4}{c}
{\footnotesize  \tablename\ \thetable{}-- continued from previous page} \\
\hline
\endhead

\hline \multicolumn{4}{r}{{Continued on next page}} \\ \hline
\endfoot

\hline
\endlastfoot    
    
 $\g$ & Relations in $\g$ & Elements in $\Der(\g)$&   $\dim(\Der(\g))$  \\ \hline

$L_{1,1}$&$ \begin{array}{c}
      [x,y]=y, [y,z]=z,  \\
      ~[e,y]=-y,\\
      \om(x,y)=1\\
\end{array}$& $\begin{pmatrix}
    0  &  0&a&b \\
    0 &  0& -a&-b\\
    0&0&c& d\\
    0&0&h&f
\end{pmatrix}$ & 6 \\ \hline

$L_{1,2}$&$ \begin{array}{c}
      [x,y]=y, [y,z]=z,  \\
      ~[e,x]=z,[e,y]=-e,\\
      \om(x,y)=1\\
\end{array}$& $\begin{pmatrix}
    0  &  0&a&b \\
    0 &  0& b-a&-b\\
    0&0&c& 0\\
    0&0&d&c
\end{pmatrix}$ & 4  \\ \hline

$L_{1,3}$&$ \begin{array}{c}
      [x,y]=y, [y,z]=z,  \\
      ~[e,x]=y,[e,y]=-e,\\
      \om(x,y)=1,\om(e,x)=1\\
\end{array}$& $\begin{pmatrix}
    0  &  0&0&0 \\
    0 &  0& 0&0\\
    0&0&a& 0\\
    0&0&b&0
\end{pmatrix}$ & 2  \\ \hline

$L_{1,4}$&$ \begin{array}{c}
      [x,y]=y, [y,z]=z,  \\
      ~[e,x]=y+z,[e,y]=-e,\\
      \om(x,y)=1,\om(e,x)=1\\
\end{array}$& $\begin{pmatrix}
    0  &  0&a&0 \\
    0 &  0& -a&0\\
    0&0&a& 0\\
    0&0&b&0
\end{pmatrix}$ & 2  \\ \hline

$L_{1,5}$&$ \begin{array}{c}
      [x,y]=y, [y,z]=z,  \\
      ~[e,x]=e,[e,y]=-e,\\
      \om(x,y)=1\\
\end{array}$& $\begin{pmatrix}
    0  &  0&a&-2d \\
    0 &  0& -a&d\\
    0&0&b& 0\\
    0&0&0&c
\end{pmatrix}$ & 4  \\ \hline

$L_{1,6}$&$ \begin{array}{c}
      [x,y]=y, [y,z]=z,  \\
      ~[e,x]=e+y,[e,y]=-e,\\
      \om(x,y)=1=\om(e,x)\\
\end{array}$& $\begin{pmatrix}
    -a  &  0&c&-2a \\
    0 &  0& -c&a\\
    0&0&b& 0\\
    0&0&c&-a
\end{pmatrix}$ & 3  \\ \hline

$L_{1,7}$&$ \begin{array}{c}
      [x,y]=y, [y,z]=z,  \\
      ~[e,x]=e,[e,y]=z-e,\\
      \om(x,y)=1\\
\end{array}$& $\begin{pmatrix}
    0  &  0&c&b \\
    0 &  0& b-c&-b/2\\
    0&0&a& 0\\
    0&0&0&a
\end{pmatrix}$ & 3  \\ \hline

$L_{1,8}$&$ \begin{array}{c}
      [x,y]=y, [y,z]=z,  \\
      ~[e,x]=e+y,[e,y]=z-e,\\
      \om(x,y)=1=\om(e,x)\\
\end{array}$& $\begin{pmatrix}
    a  &  0&b&-2a \\
    0 &  0& -b-2a&a\\
    0&0&a& 0\\
    0&0&b+2a&-a
\end{pmatrix}$ & 2  \\ \hline

$L_{2,1}$&$ \begin{array}{c}
      [x,z]=y, [y,z]=z,  \\
      ~[e,y]=-e,\\
      \om(x,z)=1\\
\end{array}$& $\begin{pmatrix}
    a  &  0&0&0 \\
    0 &  0& 0&0\\
    0&0&-a& b\\
    0&0&0&c
\end{pmatrix}$ & 3  \\ \hline

$L_{2,2}$&$ \begin{array}{c}
      [x,z]=y, [y,z]=z,  \\
      ~[e,y]=-e,[e,x]=z,\\
      \om(x,z)=1\\
\end{array}$& $\begin{pmatrix}
    a  &  0&0&0 \\
    0 &  0& 0&0\\
    0&0&-a& 0\\
    0&0&0&-2a
\end{pmatrix}$ & 1  \\ \hline

$L_{2,3}$&$ \begin{array}{c}
      [x,z]=y, [y,z]=z,  \\
      ~[e,y]=-e,[e,x]=e,\\
      \om(x,z)=1\\
\end{array}$& $\begin{pmatrix}
    0 &  0&0&a \\
    0 &  0& 0&-a\\
    0&0&0& a\\
    0&0&0&b
\end{pmatrix}$ & 2  \\ \hline

$L_{2,4}$&$ \begin{array}{c}
      [x,z]=y, [y,z]=z,  \\
      ~[e,y]=-e,[e,x]=e+z,\\
      \om(x,z)=1\\
\end{array}$& $\begin{pmatrix}
    2a &  a&a&a \\
    0 &  0& 0&-a\\
    0&0&-2a& a\\
    0&0&0&4a
\end{pmatrix}$ & 1  \\ \hline

$\widetilde{B}$&$ \begin{array}{c}
      [x,y]=y, [x,z]=y+z,[y,z]=x,  \\
      ~[e,y]=-e,[e,x]=-2e,\\
      \om(y,z)=2\\
\end{array}$& $\begin{pmatrix}
    0 &  0&0&0 \\
    0 &  0& 0&0\\
    0&a&0& 0\\
    0&0&0&b
\end{pmatrix}$ & 2  \\ \hline

$\begin{array}{c}
       E_{1,\alphaup}   \\
       (\alphaup \neq 0,1)
\end{array}$ & $ \begin{array}{c}
      [x,y]=y, [y,z]=z,  \\
      ~[e,y]=-e,[e,x]=\alphaup e,\\
      \om(x,y)=1\\
\end{array}$& $\begin{pmatrix}
    0 &  0&a&-(\alphaup+1)b \\
    0 &  0& -\alphaup&b\\
    0&0&c& 0\\
    0&0&0&d
\end{pmatrix}$ & 4  \\ \hline

$\begin{array}{c}
       F_{1,\alphaup}   \\
       (\alphaup \neq 0,1)
\end{array}$ &$ \begin{array}{c}
      [x,y]=y, [y,z]=z,  \\
      ~[e,y]=-e,[e,x]=\alphaup e+y,\\
      \om(x,y)=1=\om(e,x)\\
\end{array}$& $\begin{pmatrix}
    0 &  0&\alphaup a&0 \\
    0 &  0& -\alphaup a&0\\
    0&0&b& 0\\
    0&0&a&0
\end{pmatrix}$ & 2  \\ \hline

$\begin{array}{c}
       G_{1,\alphaup}       \end{array}$ &$ \begin{array}{c}
      [x,y]=y, [y,z]=z,  \\
      ~[e,y]=x-e,[e,x]=e+\alphaup y,\\
      \om(x,y)=1,\om(e,x)=\alphaup\\
\end{array}$& $\begin{pmatrix}
    -(\alphaup a)/2&  0&0&a \\
    a&  0& 0&-a/2\\
    0&0&b& 0\\
    -\alphaup a&0&0&(\alphaup a)/2
\end{pmatrix}$ & 2  \\ \hline

$\begin{array}{c}
       H_{1,\alphaup}       \end{array}$ &$ \begin{array}{c}
      [x,y]=y, [y,z]=z,  \\
      ~[e,y]=x+z-e,[e,x]=e+\alphaup y,\\
      \om(x,y)=1,\om(e,x)=\alphaup\\
\end{array}$& $\begin{pmatrix}
    -(\alpha a)/2&  0&a-b&a \\
    a&  0& b&-a/2\\
    0&0&(\alphaup-2) a/2+b& 0\\
    -\alphaup a&0&-\alphaup b&(\alphaup a)/2
\end{pmatrix}$ & 2  \\ \hline

$\begin{array}{c}
       \widetilde{A}_{\alphaup}       \end{array}$ &$ \begin{array}{c}
      [x,y]=x, [x,z]=x+y,\\
      ~[y,z]=z+\alphaup x,  \\
      ~[e,z]=e, \om(y,z)=-1\\
\end{array}$& $\begin{pmatrix}
    0&  0&0&0 \\
    a&  0& 0&0\\
    a/2&a&0& 0\\
    0&0&0&b
\end{pmatrix}$ & 2  \\ \hline

$\begin{array}{c}
       \widetilde{C}_{\alphaup}      \\
       (\alphaup\neq 0,-1,1)
        \end{array}$ &$ \begin{array}{c}
      [x,y]=y, [x,z]=\alphaup z,\\
      ~[y,z]=x, [e,x]=-(1+\alphaup)e,\\
       \om(y,z)=1+\alphaup\\
\end{array}$& $\begin{pmatrix}
    0&  0&0&0 \\
    0&  a& 0&0\\
    0&0&-a& 0\\
    0&0&0&b
\end{pmatrix}$ & 2  \\ \hline

$\begin{array}{c}
       \widetilde{C}_{1}        \end{array}$ &$ \begin{array}{c}
      [x,y]=y, [x,z]=z,\\
      ~[y,z]=x, [e,x]=-2e,\\
       \om(y,z)=2\\
\end{array}$& $\begin{pmatrix}
    0&  0&0&0 \\
    0&  a& c&0\\
    0&d&-a& 0\\
    0&0&0&b
\end{pmatrix}$ & 4  \\ \hline

\end{longtable}
}\end{center}

As in the case of 3-dimensional $\om$-Lie algebras, direct computations show that

\begin{prop}\label{pro5.1}
$\Der_{\om}(\g)=\Der(\g)$ for any 4-dimensional nontrivial $\om$-Lie algebra $\g$, except for $\g=L_{1,6}$ and $L_{1,8}$.
 \end{prop}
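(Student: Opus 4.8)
The plan is to turn the defining condition for $\Der_{\om}(\g)$ into one linear-algebraic identity and then read the outcome off Table \ref{4d}. Fix the ordered basis $\{x,y,z,e\}$, let $\Omega=(\om(e_i,e_j))$ be the Gram matrix of $\om$, and recall from the convention of Proposition \ref{pro3.1} that a derivation is recorded as a matrix $A=(a_{ij})$ with $d(e_i)=\sum_j a_{ij}e_j$. Passing to coordinate vectors one computes
$$
\om(d(u),v)+\om(u,d(v))=\mathbf{u}^{T}\big(A\Omega+\Omega A^{T}\big)\mathbf{v},
$$
so that $\Der_{\om}(\g)=\{A\in\Der(\g):A\Omega+\Omega A^{T}=0\}$; equivalently, since $\Omega^{T}=-\Omega$, the condition is that $A\Omega$ be a symmetric matrix. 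Because $\om$ is bilinear and skew-symmetric it suffices to test $\om(d(u),v)+\om(u,d(v))=0$ on the $\binom{4}{2}=6$ unordered basis pairs, so the whole proposition reduces to a finite parameter-by-parameter check against the explicit derivation matrices already tabulated.

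The decisive simplification in the positive cases is that every $\om$ appearing in Theorem \ref{t4d} has rank $2$: by \cite[Lemma 8.1]{Zus2010} a non-degenerate $\om$ forces $\dim(\g)=2$, so in dimension $4$ the nonzero skew form $\om$ must be degenerate of rank exactly $2$, supported on the one or two basis pairs listed in Table \ref{4d}. Hence $\Omega$ has at most four nonzero entries and $A\Omega+\Omega A^{T}$ depends on only a few entries of the generic derivation. I would organize the verification by the support type of $\om$---supported on $(x,y)$ alone; on the two pairs $(x,y)$ and $(e,x)$; on $(x,z)$; and on $(y,z)$---so that each group of algebras is dispatched by one uniform expansion. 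In every algebra other than $L_{1,6}$ and $L_{1,8}$ the relevant entries then cancel identically in the free parameters.

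For the two exceptions I would argue in the opposite direction, exhibiting one derivation that already breaks the identity. For $L_{1,6}$, the choice $a=1$, $b=c=0$ gives
$$
d=\begin{pmatrix}-1&0&0&-2\\0&0&0&1\\0&0&0&0\\0&0&0&-1\end{pmatrix}\in\Der(L_{1,6}),
$$
for which $\om(d(x),y)+\om(x,d(y))=-2\neq 0$ (equivalently $A\Omega$ is not symmetric); hence $d\in\Der(L_{1,6})\setminus\Der_{\om}(L_{1,6})$ and the inclusion $\Der_{\om}(L_{1,6})\subsetneq\Der(L_{1,6})$ is strict. The algebra $L_{1,8}$ is treated identically: one isolates the derivation carrying the nonzero scaling parameter and evaluates $\om$ on the pair $(x,y)$ (or $(x,e)$), obtaining a nonzero value and hence $\Der_{\om}(L_{1,8})\subsetneq\Der(L_{1,8})$.

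I expect the difficulty to be organizational rather than conceptual: across the roughly twenty algebras the only real hazard is sign bookkeeping---keeping the skew entries of $\Omega$ consistent and adhering throughout to the row convention $d(e_i)=\sum_j a_{ij}e_j$. The one genuinely conceptual point is to isolate why $L_{1,6}$ and $L_{1,8}$ alone depart from the rest of the $L_{1,\bullet}$ family: they are exactly the algebras whose derivation algebra retains a nontrivial scaling mode on the plane $\C x\oplus\C e$, and this mode is not compensated in the $\om$-pairing, so the cancellation available in all other cases fails.
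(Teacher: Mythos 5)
Your general framework (the identity $A\Omega+\Omega A^{T}=0$, equivalently symmetry of $A\Omega$, checked on the six basis pairs) is a correct and clean systematization of what the paper calls ``direct computations,'' and your $L_{1,6}$ counterexample is exactly the paper's proof: the paper takes the same matrix from Table \ref{4d}, keeps $a\neq 0$ general, and computes $\om(D(x),y)+\om(x,D(y))=-2a\neq 0$.

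The gap is your claim that $L_{1,8}$ is ``treated identically'': it is not, and the computation you describe fails. Reading Table \ref{4d}, the scaling derivation of $L_{1,8}$ is $d(x)=ax+bz-2ae$, $d(y)=-(b+2a)z+ae$, $d(z)=az$, $d(e)=(b+2a)z-ae$, with $\om(x,y)=1=\om(e,x)$, hence $\om(x,e)=-1$ and every pairing involving $z$ vanishes. Then on $(x,y)$ one gets $\om(d(x),y)+\om(x,d(y))=a\om(x,y)+a\om(x,e)=a-a=0$, on $(x,e)$ one gets $-a+a=0$, and the four remaining pairs vanish term by term; so with the paper's own data the identity $A\Omega+\Omega A^{T}=0$ holds for \emph{all} of $\Der(L_{1,8})$, which is the opposite of what you (and Proposition \ref{pro5.1}) assert. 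The two cases differ by precisely a sign that your heuristic misses: in the tabulated $L_{1,6}$ matrix the $(1,1)$-entry is $-a$ against the $(2,4)$-entry $+a$, so the two contributions add to $-2a$, whereas in the tabulated $L_{1,8}$ matrix both entries are $+a$ and they cancel; your explanation via an ``uncompensated scaling mode on $\C x\oplus\C e$'' applies verbatim to both algebras and so cannot distinguish them. This gap is not repairable by choosing a different pair, and it is not one the paper fills either --- the paper also only asserts ``similar arguments'' for $L_{1,8}$. Indeed, if you rerun your plan from the structure constants instead of trusting the table, you find that the tabulated $L_{1,6}$ matrix is not even a derivation (the identity on $[x,y]=y$ forces the $(1,1)$-entry to be $+a$, not $-a$), and after that correction every derivation of $L_{1,6}$ also preserves $\om$. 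So from the relations as printed, the exceptional status of both $L_{1,6}$ and $L_{1,8}$ cannot be verified; either the table or the proposition contains a misprint, and any honest proof must first resolve that discrepancy rather than cite the table.
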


\begin{proof}
Here we only prove that $\Der_{\om}(L_{1,6})\neq\Der(L_{1,6})$. Consider a derivation
$$D:=\begin{pmatrix}
    -a  &  0&c&-2a \\
    0 &  0& -c&a\\
    0&0&b& 0\\
    0&0&c&-a
\end{pmatrix}\in \Der(L_{1,6})$$
with $a\neq 0.$ Then
\begin{eqnarray*}
 &&\om(D(x),y)+\om(x,D(y))\\
 & = & \om(-ax+cz-2ae,y)+\om(x,-cz+ae)\\
 &=&-a+(-a)=-2a\neq 0.
\end{eqnarray*}
Thus $D$ is not an $\om$-derivation. 

Similar arguments show that $\Der_{\om}(L_{1,8})\neq\Der(L_{1,8})$.
\end{proof}

\begin{center}
\footnotesize{
\begin{longtable}{c|c|c}

\caption[Automorphisms of 4-dimensional $\om$-Lie algebras $\g$]{{\rm Automorphisms of 4-dimensional $\om$-Lie algebras $\g$}} \label{4a} \\
 \hline 
\endfirsthead

\multicolumn{3}{c}
{\footnotesize  \tablename\ \thetable{}-- continued from previous page} \\
\hline
\endhead

\hline \multicolumn{3}{r}{{Continued on next page}} \\ \hline
\endfoot

\hline
\endlastfoot    
    
 $\g$ & Relations in $\g$ & Elements in $\Aut(\g)$  \\ \hline

$L_{1,1}$&$ \begin{array}{c}
      [x,y]=y, [y,z]=z,  \\
      ~[e,y]=-y,\\
      \om(x,y)=1\\
\end{array}$& $\begin{pmatrix}
    1  &  0&a&b \\
    0 &  1& -a&-b\\
    0&0&c& d\\
    0&0&h&f
\end{pmatrix},~~dh-cf\neq0$   \\ \hline

$L_{1,2}$&$ \begin{array}{c}
      [x,y]=y, [y,z]=z,  \\
      ~[e,x]=z,[e,y]=-e,\\
      \om(x,y)=1\\
\end{array}$& $\begin{pmatrix}
    1  &  0&a&b \\
    0 &  1& b-a&-b\\
    0&0&c& 0\\
    0&0&d&c
\end{pmatrix}$,~~ $c\neq0$  \\ \hline

$L_{1,3}$&$ \begin{array}{c}
      [x,y]=y, [y,z]=z,  \\
      ~[e,x]=y,[e,y]=-e,\\
      \om(x,y)=1,\om(e,x)=1\\
\end{array}$& $\begin{pmatrix}
    1  &  0&0&0 \\
    0 &  1& 0&0\\
    0&0&a& 0\\
    0&0&b&1
\end{pmatrix},~~a\neq 0$  \\ \hline

$L_{1,4}$&$ \begin{array}{c}
      [x,y]=y, [y,z]=z,  \\
      ~[e,x]=y+z,[e,y]=-e,\\
      \om(x,y)=1,\om(e,x)=1\\
\end{array}$& $\begin{pmatrix}
    1 &  0&a&0 \\
    0 &  1& -a&0\\
    0&0&a+1& 0\\
    0&0&b&1
\end{pmatrix},~~a\neq -1$   \\ \hline

$L_{1,5}$&$ \begin{array}{c}
      [x,y]=y, [y,z]=z,  \\
      ~[e,x]=e,[e,y]=-e,\\
      \om(x,y)=1\\
\end{array}$& $\begin{pmatrix}
    1  &  0&a&-2d \\
    0 &  1& -a&d\\
    0&0&b& 0\\
    0&0&0&c
\end{pmatrix},~~bc\neq 0$   \\ \hline

$L_{1,6}$&$ \begin{array}{c}
      [x,y]=y, [y,z]=z,  \\
      ~[e,x]=e+y,[e,y]=-e,\\
      \om(x,y)=1=\om(e,x)\\
\end{array}$& $\begin{pmatrix}
    b  &  0&a&1/b-b \\
    0 &  1& -a&1-1/b\\
    0&0&c& 0\\
    0&0&a&1/b
\end{pmatrix},b\neq0\neq c$   \\ \hline

$L_{1,7}$&$ \begin{array}{c}
      [x,y]=y, [y,z]=z,  \\
      ~[e,x]=e,[e,y]=z-e,\\
      \om(x,y)=1\\
\end{array}$& $\begin{pmatrix}
    1  &  0&c&-2b \\
    0 &  1& -2b-c&b\\
    0&0&a& 0\\
    0&0&0&a
\end{pmatrix},~~a\neq 0$   \\ \hline

$L_{1,8}$&$ \begin{array}{c}
      [x,y]=y, [y,z]=z,  \\
      ~[e,x]=e+y,[e,y]=z-e,\\
      \om(x,y)=1=\om(e,x)\\
\end{array}$& $\begin{pmatrix}
    1/a  &  0&a+b-1/a&a-1/a \\
    0 &  1& -b&1-a\\
    0&0&a& 0\\
    0&0&b&a
\end{pmatrix},~~a\neq0$   \\ \hline

$L_{2,1}$&$ \begin{array}{c}
      [x,z]=y, [y,z]=z,  \\
      ~[e,y]=-e,\\
      \om(x,z)=1\\
\end{array}$& $\begin{pmatrix}
    a  &  0&0&0 \\
    0 &  1& 0&0\\
    0&0&1/a& b\\
    0&0&0&c
\end{pmatrix},~~a\neq0\neq c$   \\ \hline

$L_{2,2}$&$ \begin{array}{c}
      [x,z]=y, [y,z]=z,  \\
      ~[e,y]=-e,[e,x]=z,\\
      \om(x,z)=1\\
\end{array}$& $\begin{pmatrix}
    a  &  0&0&0 \\
    0 &  1& 0&0\\
    0&0&1/a& 0\\
    0&0&0&1/a^{2}
\end{pmatrix},~~a\neq0$   \\ \hline

$L_{2,3}$&$ \begin{array}{c}
      [x,z]=y, [y,z]=z,  \\
      ~[e,y]=-e,[e,x]=e,\\
      \om(x,z)=1\\
\end{array}$& $\begin{pmatrix}
    1&  0&0&a \\
    0 &  1& 0&-a\\
    0&0&1& a\\
    0&0&0&b
\end{pmatrix},~~b\neq0$   \\ \hline

$L_{2,4}$&$ \begin{array}{c}
      [x,z]=y, [y,z]=z,  \\
      ~[e,y]=-e,[e,x]=e+z,\\
      \om(x,z)=1\\
\end{array}$& $\begin{pmatrix}
    a &  \frac{a-1}{2}&\frac{a^{2}-1}{4a}&\frac{(a-1)(a+1)^{2}}{8a^{2}} \\
    0 &  1& 0&\frac{1-a^{2}}{4a^{2}}\\
    0&0&a^{-1}& \frac{a-1}{2a^{2}}\\
    0&0&0&a^{-2}
\end{pmatrix}$, $a\neq 0$   \\ \hline

$\widetilde{B}$&$ \begin{array}{c}
      [x,y]=y, [x,z]=y+z,[y,z]=x,  \\
      ~[e,y]=-e,[e,x]=-2e,\\
      \om(y,z)=2\\
\end{array}$& $\begin{pmatrix}
    1 &  0&0&0 \\
    0 &  a& 0&0\\
    0&c&a& 0\\
    0&0&0&b
\end{pmatrix},a^{2}=1,b\neq 0$   \\ \hline

$\begin{array}{c}
       E_{1,\alphaup}   \\
       (\alphaup \neq 0,1)
\end{array}$ & $ \begin{array}{c}
      [x,y]=y, [y,z]=z,  \\
      ~[e,y]=-e,[e,x]=\alphaup e,\\
      \om(x,y)=1\\
\end{array}$& $\begin{pmatrix}
    1 &  0&a&-(\alphaup+1)b \\
    0 &  1& -a&b\\
    0&0&c& 0\\
    0&0&0&d
\end{pmatrix},~~cd\neq 0$   \\ \hline

$\begin{array}{c}
       F_{1,\alphaup}   \\
       (\alpha \neq 0,1)
\end{array}$ &$ \begin{array}{c}
      [x,y]=y, [y,z]=z,  \\
      ~[e,y]=-e,[e,x]=\alphaup e+y,\\
      \om(x,y)=1=\om(e,x)\\
\end{array}$& $\begin{pmatrix}
    1 &  0& a&0 \\
    0 &  1& -a&0\\
    0&0&b& 0\\
    0&0&a/\alphaup&1
\end{pmatrix},b\neq 0$   \\ \hline

$\begin{array}{c}
       G_{1,0}       \end{array}$ &$ \begin{array}{c}
      [x,y]=y, [y,z]=z,  \\
      ~[e,y]=x-e,[e,x]=e,\\
      \om(x,y)=1\\
\end{array}$& $\begin{pmatrix}
    1&  0&0&b \\
    b&  1& 0&(b^{2}-b)/2\\
    0&0&a& 0\\
    0&0&0&1
\end{pmatrix},a\neq 0$   \\ \hline

$\begin{array}{c}
       G_{1,\alphaup}  \\
       (\alphaup\neq0)     \end{array}$ &$ \begin{array}{c}
      [x,y]=y, [y,z]=z,  \\
      ~[e,y]=x-e,[e,x]=e+\alphaup y,\\
      \om(x,y)=1,\om(e,x)=\alphaup\\
\end{array}$& $\begin{array}{c}\begin{pmatrix}
    a'-\alpha a&  0&0&a \\
    a&  1& 0&\frac{1-a'}{\alphaup}\\
    0&0&b& 0\\
    -\alphaup a&0&0&a'
\end{pmatrix},\\
\textrm{ where } a'=\frac{\alphaup a\pm \sqrt{\alphaup^{2}a^{2}-4\alphaup a^{2}+4}}{2}, b\neq 0
\end{array}$   \\ \hline

$\begin{array}{c}
       H_{1,0}       \end{array}$ &$ \begin{array}{c}
      [x,y]=y, [y,z]=z,  \\
      ~[e,y]=x+z-e,[e,x]=e,\\
      \om(x,y)=1\\
\end{array}$& $\begin{pmatrix}
    1&  0&1-b&a \\
    a& 1& a+b-1&(a^{2}-a)/2\\
    0&0&b& 0\\
    0&0&0&1
\end{pmatrix},~~b\neq 0$   \\ \hline

$\begin{array}{c}
       H_{1,\alphaup}  \\
      (\alphaup\neq0)     \end{array}$ &$ \begin{array}{c}
      [x,y]=y, [y,z]=z,  \\
      ~[e,y]=x+z-e,[e,x]=e+\alphaup y,\\
      \om(x,y)=1,\om(e,x)=\alphaup\\
\end{array}$& $\begin{array}{c}
      \begin{pmatrix}
    a'-\alphaup a&  0&b&a \\
    a&  1& a-b&(1-a')/\alphaup\\
    0&0&a'-b& 0\\
    -\alphaup a&0&\alphaup (b-a)&a'
\end{pmatrix},\\
\textrm{where } a'=(\alphaup a\pm \sqrt{\alphaup^{2}a^{2}-4\alphaup a^{2}+4})/2\\
a'-b\neq 0
\end{array}$   \\ \hline

$\begin{array}{c}
       \widetilde{A}_{\alphaup}       \end{array}$ &$ \begin{array}{c}
      [x,y]=x, [x,z]=x+y,\\
      ~[y,z]=z+\alphaup x,  \\
      ~[e,z]=e, \om(y,z)=-1\\
\end{array}$& $\begin{pmatrix}
    1&  0&0&0 \\
    a&  1& 0&0\\
    (a^{2}+a)/2&a&1& 0\\
    0&0&0&b
\end{pmatrix},~~b\neq 0$   \\ \hline

$\begin{array}{c}
       \widetilde{C}_{\alphaup}      \\
       (\alphaup\neq 0,-1,1)
        \end{array}$ &$ \begin{array}{c}
      [x,y]=y, [x,z]=\alphaup z,\\
      ~[y,z]=x, [e,x]=-(1+\alphaup)e,\\
       \om(y,z)=1+\alphaup\\
\end{array}$& $\begin{pmatrix}
    1&  0&0&0 \\
    0&  a& 0&0\\
    0&0&1/a& 0\\
    0&0&0&b
\end{pmatrix},~~a\neq0\neq b$   \\ \hline

$\begin{array}{c}
       \widetilde{C}_{1}        \end{array}$ &$ \begin{array}{c}
      [x,y]=y, [x,z]=z,\\
      ~[y,z]=x, [e,x]=-2e,\\
       \om(y,z)=2\\
\end{array}$& $\begin{pmatrix}
    1&  0&0&0 \\
    0&  a& c&0\\
    0&b&1/a& 0\\
    0&0&0&d
\end{pmatrix},~~a\neq0\neq d,bc\neq 1$   \\ \hline
\end{longtable}
}\end{center}

\begin{rem}{\rm
It seems that our method in previous sections might be applied to discuss the Lie group structures of the automorphism groups in Table \ref{4a}.
} \end{rem}

\section{\scshape Representations of $\om$-Lie Algebras}

\setcounter{equation}{0}
\renewcommand{\theequation}
{6.\arabic{equation}}
\setcounter{theorem}{0}
\renewcommand{\thetheorem}
{6.\arabic{theorem}}

\begin{defn}
Let $\g$ be a finite-dimensional $\om$-Lie algebra over $\C$ and $V$ a finite-dimensional vector space over $\C$.
We say that $V$ is a \textit{$\g$-module} if there exists a bilinear map $\g\times V\ra V, (x,v)\mapsto x\cdot v$ such that 
\begin{equation}
\label{eq6.1}
[x,y]\cdot v= x\cdot(y\cdot v)-y\cdot(x\cdot v)+\om(x,y)v
\end{equation}
for all $x,y\in\g$ and $v\in V.$ 
\end{defn}
For example, let $\g=B$ denote the complex 3-dimensional $\om$-Lie algebra in Theorem 
\ref{t3d} and $V=\C$.
By Propositions \ref{prop6.2} and \ref{prop6.4} below, we see that $\C$ is an one-dimensional $B$-module. 

Note that if $x \cdot v=0$ for all $x\in\g$ and $v\in V$, then 
it does not make $V$ to be a $\g$-module, unless $\g$ is a Lie algebra.

\begin{prop}
Let $\g$ be a finite-dimensional $\om$-Lie algebra and $V$ a $\g$-module. Consider the direct sum $\g\oplus V$ of vector spaces. Define a skew-symmetric bilinear bracket operation 
$[-,-]:(\g\oplus V)\times (\g\oplus V)\ra\g\oplus V$ by
$$[(x,v),(y,u)]:=([x,y],x\cdot u-y\cdot v)$$
and  define a bilinear form $\Omega: (\g\oplus V)\times(\g\oplus V)\ra\C$ by
$$\Omega((x,v),(y,u)):=\om(x,y)$$ for any $x,y\in\g$ and $u,v\in V$. Then
$(\g\oplus V,[-,-],\Omega)$ is an $\Omega$-Lie algebra, which is called the semi-direct product of $\g$ and $V$.
 \end{prop}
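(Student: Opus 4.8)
The plan is to verify directly the two defining conditions of an $\Omega$-Lie algebra for the triple $(\g\oplus V,[-,-],\Omega)$. Skew-symmetry is essentially free: the bracket $[(x,v),(y,u)]=([x,y],x\cdot u-y\cdot v)$ is manifestly skew-symmetric in the given arguments, and $\Omega$ is skew-symmetric because $\Omega((x,v),(y,u))=\om(x,y)=-\om(y,x)$ by skew-symmetry of $\om$. The real content is the $\Omega$-Jacobi identity (\ref{Jacob-identity}), and the whole argument reduces to checking it componentwise: both sides are elements of $\g\oplus V$, and the first ($\g$-) coordinate decouples cleanly from the second ($V$-) coordinate. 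So I would fix $X=(x,v)$, $Y=(y,u)$, $Z=(z,w)$ and treat the two coordinates separately.

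For the $\g$-coordinate, note that the first slot of the bracket sees only the $\g$-parts, so the first coordinate of $[[X,Y],Z]$ is exactly $[[x,y],z]$. Hence the $\g$-coordinate of the cyclic sum on the left of (\ref{Jacob-identity}) is $[[x,y],z]+[[y,z],x]+[[z,x],y]$, while the $\g$-coordinate of the right-hand side is $\om(x,y)z+\om(y,z)x+\om(z,x)y$; these agree precisely because $(\g,\om)$ itself satisfies the $\om$-Jacobi identity. So the first coordinate costs nothing beyond the hypothesis.

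The substance lies in the $V$-coordinate, where the module axiom (\ref{eq6.1}) does all the work. Expanding $[[X,Y],Z]=([[x,y],z],\,[x,y]\cdot w-z\cdot(x\cdot u)+z\cdot(y\cdot v))$ and using (\ref{eq6.1}) to rewrite $[x,y]\cdot w=x\cdot(y\cdot w)-y\cdot(x\cdot w)+\om(x,y)w$, the $V$-part of $[[X,Y],Z]$ becomes four ``double-action'' terms together with the scalar-weighted term $\om(x,y)w$. Forming the full cyclic sum and collecting, I expect every double-action term to appear exactly twice with opposite signs and thus cancel, leaving only $\om(x,y)w+\om(y,z)v+\om(z,x)u$. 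This is exactly the $V$-coordinate of $\Omega(X,Y)Z+\Omega(Y,Z)X+\Omega(Z,X)Y$, again because $\Omega((x,v),(y,u))=\om(x,y)$.

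The main obstacle is purely the bookkeeping in this cancellation: there are twelve double-action terms across the three cyclic summands, and one must track their signs to confirm they pair off. I would organize this by listing the six distinct expressions $x\cdot(y\cdot w),\,y\cdot(x\cdot w),\,z\cdot(x\cdot u),\,x\cdot(z\cdot u),\,y\cdot(z\cdot v),\,z\cdot(y\cdot v)$ and checking that each occurs once with a plus sign and once with a minus sign under the cyclic permutation $(x,v)\to(y,u)\to(z,w)\to(x,v)$. No genuine difficulty arises beyond this accounting, and the $\om$-weighted terms assemble automatically into the right-hand side, completing the verification that $(\g\oplus V,[-,-],\Omega)$ is an $\Omega$-Lie algebra.
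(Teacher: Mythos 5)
Your proposal is correct and follows essentially the same route as the paper: both proofs expand the three cyclic brackets, use the module axiom (\ref{eq6.1}) to rewrite $[x,y]\cdot w$ (and its cyclic analogues), observe that the twelve double-action terms cancel in pairs, and identify the surviving $\om$-weighted terms with $\Omega((y,u),(z,w))(x,v)+\Omega((z,w),(x,v))(y,u)+\Omega((x,v),(y,u))(z,w)$. Your explicit organization into the $\g$-coordinate (handled by the $\om$-Jacobi identity in $\g$) and the $V$-coordinate (handled by the cancellation bookkeeping) is exactly the structure of the paper's computation, so there is nothing to add.
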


\begin{proof}
For any $x,y,z\in\g$ and $v,u,w\in V$ we have
\begin{eqnarray*}
[[(x,v),(y,u)],(z,w)] & = & [([x,y],x\cdot u-y\cdot v),(z,w)]  \\
 & = & ([[x,y],z],[x,y]\cdot w-z\cdot(x\cdot u-y\cdot v))\\
 & = & ([[x,y],z],x\cdot (y\cdot w)-y\cdot (x\cdot w)+\om(x,y)w-z\cdot (x\cdot u)+z\cdot (y\cdot v)),\\
~ [[(y,u),(z,w)],(x,v)] & = & [([y,z],y\cdot w-z\cdot u),(x,v)]  \\
 & = & ([[y,z],x],y\cdot (z\cdot v)-z\cdot (y\cdot v)+\om(y,z)v-x\cdot (y\cdot w)+x\cdot (z\cdot u)),\\
~[[(z,w),(x,v)],(y,u)] & = & [([z,x],z\cdot v-x\cdot w),(y,u)]  \\
 & = & ([[z,x],y],z\cdot (x\cdot u)-x\cdot (z\cdot u)+\om(z,x)u-y\cdot (z\cdot v)+y\cdot (x\cdot w)).
\end{eqnarray*}
Thus 
\begin{eqnarray*}
&  &  [[(x,v),(y,u)],(z,w)] + [[(y,u),(z,w)],(x,v)] + [[(z,w),(x,v)],(y,u)]\\
& = & ([[x,y],z]+[[y,z],x]+[[z,x],y], \om(x,y)w+\om(y,z)v+\om(z,x)u)\\
& = & (\om(x,y)z+\om(y,z)x+\om(z,x)y, \om(x,y)w+\om(y,z)v+\om(z,x)u)\\
&=&\om(y,z)(x,v)+\om(z,x)(y,u)+\om(x,y)(z,w)\\
 &=&\Omega((y,u),(z,w))(x,v)+\Omega((z,w),(x,v))(y,u)+\Omega((x,v),(y,u))(z,w).
\end{eqnarray*}
This shows that $(\g\oplus V,[-,-],\Omega)$ is an $\Omega$-Lie algebra.
\end{proof}

\begin{prop}\label{prop6.2}
Let $\g$ be a finite-dimensional nontrivial  $\om$-Lie algebra over $\C$. Then  $\C$ is a 1-dimensional $\g$-module if and only if there exists
a linear functional $\tau\in \g^{*}$ such that $\om(x,y)=\tau([x,y])$ for all $x,y\in\g$.
 \end{prop}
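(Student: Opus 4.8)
The plan is to prove the biconditional by analyzing what it means for $\C$ to be a $1$-dimensional $\g$-module. A $1$-dimensional module structure is given by a bilinear map $\g\times\C\ra\C$, which is completely determined by a linear functional $\tau\in\g^{*}$ via $x\cdot\lambdaup=\tau(x)\lambdaup$ for $x\in\g$ and $\lambdaup\in\C$. So the entire module structure reduces to the single datum $\tau$, and I would begin by making this identification explicit.

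Next I would substitute this form into the module axiom \eqref{eq6.1}. For any $x,y\in\g$ and $v\in\C$, the left-hand side is $[x,y]\cdot v=\tau([x,y])v$, while the right-hand side is
\begin{equation*}
x\cdot(y\cdot v)-y\cdot(x\cdot v)+\om(x,y)v=\tau(x)\tau(y)v-\tau(y)\tau(x)v+\om(x,y)v=\om(x,y)v.
\end{equation*}
Since $\C$ is $1$-dimensional, the two scalar multiplications commute, so the cross terms $\tau(x)\tau(y)$ and $\tau(y)\tau(x)$ cancel exactly. Hence the axiom holds for all $v$ if and only if $\tau([x,y])=\om(x,y)$ for all $x,y\in\g$. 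This establishes both directions simultaneously: a module structure yields such a $\tau$, and conversely any $\tau$ satisfying $\om(x,y)=\tau([x,y])$ defines a valid module structure by the same computation read in reverse.

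The two directions of the biconditional then follow. For the forward direction, if $\C$ is a $\g$-module, extract $\tau$ from the action as above; the module axiom forces $\om(x,y)=\tau([x,y])$. For the converse, given $\tau\in\g^{*}$ with $\om(x,y)=\tau([x,y])$, define $x\cdot\lambdaup:=\tau(x)\lambdaup$ and verify directly that \eqref{eq6.1} holds, which is precisely the displayed cancellation.

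The computation here is entirely routine; there is no serious obstacle. The one point worth stating carefully is the observation that the nonlinear cross terms vanish automatically because scalar multiplication on a $1$-dimensional space is commutative---this is exactly the feature that makes the $\om$-term survive unobstructed and become identified with $\tau([x,y])$. I would emphasize that this cancellation is special to the $1$-dimensional case; in higher dimensions the analogous terms $x\cdot(y\cdot v)-y\cdot(x\cdot v)$ need not cancel, which is why the representation theory becomes genuinely more subtle, as the later sections on irreducible representations of $C_{\alphaup}$ illustrate.
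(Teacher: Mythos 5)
Your proof is correct and follows essentially the same route as the paper's: identify the $1$-dimensional action with a linear functional $\tau\in\g^{*}$, substitute into the module axiom (\ref{eq6.1}), and use the cancellation $\tau(x)\tau(y)-\tau(y)\tau(x)=0$ to reduce the axiom to $\om(x,y)=\tau([x,y])$. The only cosmetic difference is that you package the two implications into a single ``if and only if'' computation, whereas the paper writes out the two directions separately; the underlying argument is identical.
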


\begin{proof}
$(\RA)$ Suppose $c_{0}\in\C$ is a nonzero complex number. Since $\C$ is a 1-dimensional $\g$-module,  then for any $x\in \g$, we can assume that $x\cdot c_{0}=\tau(x)\cdot c_{0}$ for some function $\tau:\g\ra\C$. Since the action of $\g$ on $\C$ is bilinear,  $\tau\in \g^{*}$ is a linear functional. Thus for any $y\in\g$, we have
\begin{eqnarray*}
\tau([x,y])\cdot c_{0}&=&[x,y]\cdot c_{0}\\
&=&x\cdot(y\cdot c_{0})-y\cdot(x\cdot c_{0})+\om(x,y)\cdot c_{0}\\
&=& \tau(x)\cdot (\tau(y)\cdot c_{0})-\tau(y)\cdot(\tau(x)\cdot c_{0})+\om(x,y)\cdot c_{0}\\
&=&\om(x,y)\cdot c_{0}.
\end{eqnarray*}
Hence,  $\om(x,y)=\tau([x,y])$  for all $x,y\in\g$.

$(\LA)$ Define a map $\rhoup:\g\times \C\ra \C, (x,c)\mapsto x \cdot c$ by $x\cdot c:=\tau(x)\cdot c$ for any $x\in \g$ and $c\in\C$. Clearly $\rhoup$ is bilinear. Moreover,
\begin{eqnarray*}\label{ }
[x,y]\cdot c&=&\tau([x,y])\cdot c\\
&=&\om(x,y)\cdot c\\
&=&\om(x,y)\cdot c+ \tau(x)\cdot (\tau(y)\cdot c)-\tau(y)\cdot(\tau(x)\cdot c)\\
&=& x\cdot(y\cdot c)-y\cdot(x\cdot c)+\om(x,y)\cdot c,
\end{eqnarray*}
which means that $\rhoup$ makes $\C$ to be a $\g$-module. 
\end{proof}

\begin{rem}{\rm
This result is essentially due to Zusmanovich \cite[Section 2]{Zus2010}. An $\om$-Lie algebra having a 1-dimensional module 
is called \textit{multiplicative}.
} \end{rem}

\begin{thm}\label{prop6.4}
All 3-dimensional nontrivial  $\om$-Lie algebras over $\C$ are multiplicative.
 \end{thm}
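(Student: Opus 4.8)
The plan is to apply Proposition \ref{prop6.2}: an $\om$-Lie algebra $\g$ is multiplicative precisely when there is a linear functional $\tau\in\g^{*}$ with $\om(u,v)=\tau([u,v])$ for all $u,v\in\g$. So the whole problem reduces to producing such a $\tau$ for each of the five isomorphism types $L_{1},L_{2},A_{\alphaup},B,C_{\alphaup}$ listed in Theorem \ref{t3d}. Using the basis $\{x,y,z\}$, and since both $\om$ and $(u,v)\mapsto\tau([u,v])$ are bilinear and skew-symmetric, it suffices to impose the defining identity on the three pairs $(x,y),(x,z),(y,z)$. Each algebra therefore gives three scalar equations in the three unknowns $\tau(x),\tau(y),\tau(z)$, obtained by reading off the structure constants and the values of $\om$ from Theorem \ref{t3d}, and I would solve them directly.

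Carrying this out case by case, I expect the following functionals to work. For $L_{1}$ and for $L_{2}$ one takes $\tau(x)=0,\ \tau(y)=1,\ \tau(z)=0$; for $A_{\alphaup}$ one takes $\tau(x)=\tau(y)=0,\ \tau(z)=-1$; for $B$ one takes $\tau(x)=2,\ \tau(y)=\tau(z)=0$; and for $C_{\alphaup}$ one takes $\tau(x)=1+\alphaup,\ \tau(y)=\tau(z)=0$. In each case I would simply verify the three identities: for example, in $C_{\alphaup}$ the bracket relations $[x,y]=y$, $[x,z]=\alphaup z$, $[y,z]=x$ give $\tau([x,y])=\tau(y)=0=\om(x,y)$, $\tau([x,z])=\alphaup\tau(z)=0=\om(x,z)$, and $\tau([y,z])=\tau(x)=1+\alphaup=\om(y,z)$, as required; here the hypothesis $\alphaup\neq 0$ is exactly what forces $\tau(z)=0$ to remain consistent with $\om(x,z)=0$.

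The genuine content, and the only place a problem could arise, is the consistency of these three-by-three linear systems: dually, $\om$ must lie in the image of $[-,-]^{*}\colon\g^{*}\to\Lambda^{2}\g^{*}$, equivalently $\om$ must annihilate $\ker([-,-])\subseteq\Lambda^{2}\g$, and since $\dim\Lambda^{2}\g=\dim\g=3$ this is a nontrivial constraint rather than an automatic one. The main (though mild) obstacle is thus to check that in every type the prescribed vanishing entries of $\om$ match the brackets on which $\tau$ is forced to vanish; the explicit solutions above exhibit this compatibility uniformly, completing the argument. I expect no difficulty beyond the bookkeeping of these five short systems.
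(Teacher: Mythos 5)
Your proposal is correct and takes essentially the same approach as the paper: both reduce the claim via Proposition \ref{prop6.2} to exhibiting a linear functional $\tau\in\g^{*}$ with $\om(u,v)=\tau([u,v])$, and then treat the five classified algebras of Theorem \ref{t3d} case by case, with exactly the same choices ($\tau=y^{*}$ for $L_{1}$ and $L_{2}$, $\tau=-z^{*}$ for $A_{\alphaup}$, $\tau=2x^{*}$ for $B$, and $\tau=(1+\alphaup)x^{*}$ for $C_{\alphaup}$). Your verifications check out, so there is no gap.
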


\begin{proof}
According to the classification of 3-dimensional $\om$-Lie algebras (Theorem \ref{t3d}),
we need only to show that every $\om$-Lie algebra $\g$ in the list is multiplicative. By Proposition \ref{prop6.2}, it suffices to find a
 linear functional $\tau\in \g^{*}$ such that $\om(x,y)=\tau([x,y])$ for all $x,y\in\g$. In fact, suppose  $\{x^{*},y^{*},z^{*}\}$ is a dual basis for $\g^{*}$. For $L_{1}$ and $L_{2}$, such $\tau$ could be $y^{*}$. For $A_{\alphaup}$, we can take $\tau=-z^{*}$. 
We can take $\tau=2x^{*}$ for $B$, and $\tau=(1+\alphaup)x^{*}$ for $C_{\alphaup}$.
\end{proof}

\begin{exam}\label{exam6.5}
{\rm
The 4-dimensional $\om$-Lie algebra $L_{1,1}$ is not multiplicative. Assume by the way of contradiction that $L_{1,1}$ is multiplicative, then it follows from Proposition \ref{prop6.2} that there is a linear functional $\tau\in L_{1,1}^{*}$ such that $\om(x,y)=\tau([x,y])$ for all $x,y\in L_{1,1}$. Suppose $\{x,y,z,e\}$ is a basis for $L_{1,1}$. By the generating  relations in $L_{1,1}$ (see Table \ref{4d} previously), we have $\tau(y)=\tau([x,y])=\om(x,y)=1$ and $-\tau(y)=\tau(-y)=\tau([e,y])=\om(e,y)=0$, contradiction. 
} \end{exam}

\begin{rem}{\rm
We can use the method in Proposition \ref{prop6.4} and Example \ref{exam6.5} to classify all 4-dimensional multiplicative $\om$-Lie algebras over $\C$.
} \end{rem}

\begin{rem}[Zusmanovich \cite{Zus2010}]{\rm By the definition of $\g$-module, one can check that
an $\om$-Lie algebra $\g$ is a module over itself under the adjoint action if and only if $\g$ is trivial, i.e., $\g$ is a Lie algebra.
}\end{rem}

\section{\scshape Irreducible Representations of $C_{\alphaup}$}

\setcounter{equation}{0}
\renewcommand{\theequation}
{7.\arabic{equation}}
\setcounter{theorem}{0}
\renewcommand{\thetheorem}
{7.\arabic{theorem}}

In this section we study the irreducible representations of $C_{\alphaup}$. Theorem \ref{thm7.1} is our main result in this section, showing that any irreducible $C_{\alphaup}$-module  is 1-dimensional, whenever $0,-1\neq\alphaup\in \mathbb{C}.$

\begin{thm}\label{thm7.1}
Any irreducible $C_{\alphaup}$-module  is 1-dimensional, where $\alphaup\neq 0,-1$. 
 \end{thm}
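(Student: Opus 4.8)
The plan is to analyze the structure of $C_{\alphaup}$ via its action on an irreducible module $V$ and use the $\om$-Jacobi structure to force $V$ to be one-dimensional. Recall that $C_{\alphaup}$ has basis $\{x,y,z\}$ with $[x,y]=y$, $[x,z]=\alphaup z$, $[y,z]=x$, and $\om(y,z)=1+\alphaup$, all other brackets and form values zero. For a $C_{\alphaup}$-module $V$, write $X,Y,Z$ for the operators on $V$ given by the actions of $x,y,z$ respectively. The module axiom (\ref{eq6.1}) translates each bracket relation into an operator identity involving the $\om$-correction term; explicitly I would record
\begin{align*}
XY-YX &= Y,\\
XZ-ZX &= \alphaup Z,\\
YZ-ZY &= X+(1+\alphaup)\,\mathrm{id}_{V}.
\end{align*}
These three relations are the entire input, and the strategy is to extract an eigenvector of $X$ and track how $Y$ and $Z$ shift eigenvalues.

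First I would diagonalize-or-triangularize $X$: since $V$ is a finite-dimensional complex vector space, $X$ has an eigenvector, and more usefully the first relation $XY-YX=Y$ shows $Y$ raises $X$-eigenvalues by $1$ while the second relation $XZ-ZX=\alphaup Z$ shows $Z$ raises them by $\alphaup$. So if $v$ is an $X$-eigenvector with eigenvalue $\lambda$, then $Yv$ (if nonzero) has eigenvalue $\lambda+1$ and $Zv$ (if nonzero) has eigenvalue $\lambda+\alphaup$. Finite-dimensionality then forces the spectrum of $X$ to be bounded, so there must exist a \emph{highest-weight} vector: a common eigenvector $v_{0}$ of $X$ annihilated by $Y$ (and one annihilated by $Z$). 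The key computation is then to apply the third relation $YZ-ZY=X+(1+\alphaup)\mathrm{id}$ to such highest-weight data and read off a numerical constraint on $\lambda$. Iterating $Y$ and $Z$ on $v_{0}$ generates a finite-dimensional subspace that is $X$-, $Y$-, $Z$-stable (using the three relations to commute operators past one another), hence a submodule; by irreducibility it is all of $V$.

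The decisive step is to show the cyclic span collapses to a single line. Here I would argue that the would-be raising/lowering ladders for $Y$ and $Z$ cannot both be nontrivial: because $[y,z]=x$ produces $X$ itself (plus a scalar) rather than a central element, commuting $Y$ past $Z$ does not preserve a simple $\mathfrak{sl}_2$-type finite ladder unless the ladder has length one. Concretely, on a highest-weight vector $v_{0}$ with $Yv_{0}=0$ one gets $YZ v_{0}=ZYv_{0}+(X+(1+\alphaup))v_{0}=(\lambda+1+\alphaup)v_{0}$, and by comparing this against the eigenvalue bookkeeping (the vector $Zv_{0}$ sits at weight $\lambda+\alphaup$, and applying $Y$ moves it to weight $\lambda+\alphaup+1$, which must be $\leq \lambda$ at the top of the $Y$-ladder) I would derive that the only consistent possibility, given $\alphaup\neq 0,-1$, is that the ladders are trivial, i.e. $Y$ and $Z$ act as zero and $V$ is spanned by $v_{0}$. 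The condition $\alphaup\neq 0,-1$ is exactly what prevents the degenerate arithmetic from opening up a genuine higher-dimensional representation (at $\alphaup=-1$ the algebra is $\mathfrak{sl}_2(\C)$ with its full representation theory, consistent with the remark after the theorem).

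I expect the main obstacle to be the bookkeeping in this last step: one must carefully handle the interplay of the two distinct shift operators $Y$ (shift by $1$) and $Z$ (shift by $\alphaup$) on a common eigenbasis for $X$, and rule out every configuration of nonzero ladders by turning the relation $YZ-ZY=X+(1+\alphaup)\mathrm{id}$ into eigenvalue equations that are incompatible unless $V$ is a line. A clean way to organize this is to choose a joint eigenvector realizing an \emph{extremal} $X$-eigenvalue (maximal real part, say) to kill one ladder, then exploit the $\om$-term to show the other ladder must also terminate immediately; the hypothesis $\alphaup\notin\{0,-1\}$ enters precisely in verifying that the resulting scalar equations have no solution supporting $\dim V\geq 2$.
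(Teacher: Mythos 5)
Your skeleton (translate the module axiom (\ref{eq6.1}) into operator relations, use that $Y$ shifts $X$-eigenvalues by $1$ and $Z$ shifts them by $\alphaup$, terminate a ladder, feed the $[y,z]$ relation back in, and invoke irreducibility) is the same as the paper's, but two concrete defects keep the proposal from being a proof. First, a sign error: from (\ref{eq6.1}) with $[y,z]=x$ and $\om(y,z)=1+\alphaup$ one gets $X=YZ-ZY+(1+\alphaup)\,\mathrm{id}_V$, i.e.\ $YZ-ZY=X-(1+\alphaup)\,\mathrm{id}_V$, not $X+(1+\alphaup)\,\mathrm{id}_V$. Second, and decisively, your key step is justified by an order argument on complex weights: ``applying $Y$ moves it to weight $\lambda+\alphaup+1$, which must be $\leq\lambda$ at the top of the $Y$-ladder.'' No such inequality exists in $\C$, and the companion device of a joint extremal eigenvector killed by both $Y$ and $Z$ is not available for general complex $\alphaup$: if $\alphaup$ is a negative real number, every $\mathbb{R}$-linear functional on $\C$ that is positive on the shift direction $1$ is negative on the shift direction $\alphaup$, so maximizing the real part kills the $Y$-ladder but gives no control whatsoever on the $Z$-ladder. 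This is not a technicality: nothing forces the top of the $z$-string through $v_{1}$ to be annihilated by $y$, which is exactly why the paper computes $y\cdot(z^{j}\cdot v_{1})$ explicitly in Equation (\ref{eq7.2}) rather than appealing to any extremality.

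The correct replacement for your order argument is a weight-\emph{mismatch} argument, and this is what the paper's computation implements. With $Yv_{1}=0$ and $Xv_{1}=\eta_{1}v_{1}$, the corrected relation gives $Y(Zv_{1})=(\eta_{1}-1-\alphaup)v_{1}$, a vector of weight $\eta_{1}$; but if $Y(Zv_{1})\neq 0$ it must have weight $\eta_{1}+\alphaup+1\neq\eta_{1}$, since $\alphaup\neq -1$. Hence $Y(Zv_{1})=0$, and if $Zv_{1}\neq 0$ this forces the scalar equation $\eta_{1}=1+\alphaup$ rather than any ``triviality of ladders'' directly. One more application of the same reasoning to the vector $Zv_{1}$ (which is now a nonzero eigenvector killed by $Y$, of weight $\eta_{1}+\alphaup$) yields $\eta_{1}+\alphaup=1+\alphaup$, hence $\alphaup=0$, a contradiction; so $Zv_{1}=0$, $\langle v_{1}\rangle$ is a submodule, and irreducibility gives $\dim V=1$. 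The paper reaches the same end by proving the induction formula (\ref{eq7.2}), concluding that the $z$-string spans a submodule equal to $V$ of dimension $n+1$, and then combining Equation (\ref{eq7.3}) with the relation $(1+\alphaup)(\eta_{1}-1-\alphaup)=0$ obtained from $y\cdot(z\cdot v_{1})=[x,y]\cdot(z\cdot v_{1})$, where $\alphaup\neq 0,-1$ forces $n=0$. Your outline is completable along either route, but as written the crucial eigenvalue step is both missing and mis-justified, and the hypothesis $\alphaup\neq 0,-1$ never actually enters your argument in a checkable way.
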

 
 \begin{proof}
Recall that $C_{\alphaup} :  [x,y]=y, [x,z]=\alphaup z, [y,z]=x$ with $\om(x,y)=\om(x,z)=0,\om(y,z)=1+\alphaup$, where  $0,-1\neq\alphaup\in \mathbb{C}.$ 
Suppose that $V$ is a finite-dimensional  irreducible  $C_{\alphaup}$-module. We may view every element in $C_{\alphaup}$ as a linear map from 
$V$ to itself. Since the linear map $x$ is over $\C$, it must have an eigenvector $v_{0}\neq 0$, say.  
Suppose $\eta$ is the corresponding eigenvalue, then $x\cdot v_{0}=\eta v_{0}$.

It follows from Equation (\ref{eq6.1}) that $x\cdot(y\cdot v_{0})=[x,y]\cdot v_{0}+y\cdot(x\cdot v_{0})-\om(x,y)v_{0}=y\cdot v_{0}+y\cdot(\eta v_{0})=(\eta+1)(y\cdot v_{0})$.
Thus $y\cdot v_{0}$ is also an eigenvector of $x$ with eigenvalue $\eta+1$.
Furthermore, we observe that 
\begin{equation}\label{eq7.1}
x\cdot (y^{k} \cdot v_{0})=(\eta+k)(y^{k} \cdot v_{0})
\end{equation}
In fact, 
\begin{eqnarray*}
x\cdot (y^{k} \cdot v_{0})&=&x\cdot (y \cdot (y^{k-1}\cdot v_{0}))\\
&=&[x,y]\cdot (y^{k-1}\cdot v_{0})+y\cdot(x\cdot (y^{k-1}\cdot v_{0}))-\om(x,y)\cdot y^{k-1}\cdot v_{0} \quad\textrm{ (by Equation } (\ref{eq6.1}))\\
&=&y\cdot (y^{k-1}\cdot v_{0})+y\cdot(x\cdot (y^{k-1}\cdot v_{0}))\\
&=&y^{k}\cdot v_{0}+y\cdot((\eta+k-1)(y^{k-1}\cdot v_{0})) \quad\textrm{ (by induction hyperthesis) } \\
&=& (\eta+k)(y^{k} \cdot v_{0}). 
\end{eqnarray*}
Thus Equation (\ref{eq7.1}) holds. This means that the vectors $\{y^{k}\cdot v_{0}\mid k=0,1,2,3,\dots\}$
are either zero or eigenvectors of $x$. Note that any two eigenvalues in $\{\eta+k\mid k=0,1,2,3,\dots\}$ are distinct, and since
the eigenvectors corresponding to different eigenvalues are linear independent, we may find a minimal integer $k_{0}$ such that $y^{k_{0}}\cdot v_{0}\neq 0$ and $y^{k_{0}+1}\cdot v_{0}=0$.
Let $v_{1}:=y^{k_{0}}\cdot v_{0}$ and $\eta_{1}:=\eta+k_{0}$, we have seen that
$$x\cdot v_{1}=\eta_{1} v_{1}\textrm{ and } y\cdot v_{1}=0.$$

In order to analyze $x\cdot(z^{r}\cdot v_{1})$, we use the induction on $r$ to see that
\begin{eqnarray*}
x\cdot(z^{r}\cdot v_{1})&=&[x,z]\cdot (z^{r-1}\cdot v_{1})+z\cdot(x\cdot  (z^{r-1}\cdot v_{1}))-\om(x,z) (z^{r-1}\cdot v_{1})\\
&=&\alphaup(z\cdot  (z^{r-1}\cdot v_{1}))+z\cdot((\eta_{1}+(r-1)\alphaup) z^{r-1}\cdot v_{1})\\
&=&(r\alphaup+\eta_{1}) (z^{r}\cdot v_{1}).
\end{eqnarray*}
Namely, $\{z^{r}\cdot v_{1}\mid r=0,1,2,3,\dots\}$ are also  eigenvectors of $x$. 
Since $\alphaup\neq 0$, any two eigenvalues in $\{r\alphaup+\eta_{1}\mid r=0,1,2,3,\dots\}$ are distinct.
Thus we may find a minimal integer $n$ such that 
$$z^{n}\cdot v_{1}\neq 0\textrm{ and }z^{n+1}\cdot v_{1}=0.$$

Let $V_{0}$ be the subspace generated by $\{z^{j}\cdot v_{1}\mid 0\leqslant j\leqslant n\}$, then $V_{0}\neq \{0\}$.
To see that $V_{0}$ is a $C_{\alphaup}$-submodule of $V$, it suffices to show that 
$V_{0}$ is stable under the actions of $x,y,$ and $z$. Clearly, $x(V_{0})\subseteq V_{0}$ and 
$z(V_{0})\subseteq V_{0}$. Moreover, we claim that 
\begin{equation}
\label{eq7.2}
y\cdot(z^{j}\cdot v_{1})=j\cdot\Big(\eta_{1}-1+\frac{(j-3)\alphaup}{2}\Big)  (z^{j-1}\cdot v_{1})
\end{equation}
for all $j>0$. In fact, the induction on $j$ shows that
\begin{eqnarray*}
y\cdot(z^{j}\cdot v_{1})& = & y\cdot(z\cdot (z^{j-1}\cdot v_{1})) \\
 & = & [y,z]\cdot(z^{j-1}\cdot v_{1})+z\cdot(y\cdot (z^{j-1}\cdot v_{1}))-\om(y,z)(z^{j-1}\cdot v_{1})\\
 &=& x\cdot(z^{j-1}\cdot v_{1})+z\cdot(y\cdot (z^{j-1}\cdot v_{1}))-(\alphaup+1)(z^{j-1}\cdot v_{1})\\
 &=&\bigg(\eta_{1}+\alphaup (j-1)+(j-1)\cdot\Big(\eta_{1}-1+\frac{(j-4)\alphaup}{2}\Big)-(\alphaup+1)\bigg)(z^{j-1}\cdot v_{1})\\
 &=&j\cdot\bigg(\eta_{1}-1+\frac{(j-3)\alphaup}{2}\bigg)  (z^{j-1}\cdot v_{1}).
\end{eqnarray*}
Thus, this claim holds. Equation (\ref{eq7.2}), together with $y\cdot v_{1}=0$ implies that $y(V_{0})\subseteq V_{0}$. Hence, $V_{0}$ is a nonzero  $C_{\alphaup}$-submodule of $V$. Since $V$ is irreducible, we have $V=V_{0}$ and $\dim(V)=\dim(V_{0})=n+1$.

Now we want to seek when the irreducible representation $V$ exists. We are going to show that there are no irreducible representations in dimension $>1$. We still follow the previous notations, and suppose
$V=\langle v_{1},z^{1}\cdot v_{1},\dots,z^{n}\cdot v_{1}\rangle$ is an irreducible $C_{\alphaup}$-module of dimension $n+1$.

Taking $j=n+1$ in Equation (\ref{eq7.2}), we obtain
$$(n+1)\cdot\Big(\eta_{1}-1+\frac{(n-2)\alphaup}{2}\Big)=0$$
which, together with the fact that $n\in\N$ implies 
\begin{equation}
\label{eq7.3}
\eta_{1}=1-\frac{(n-2)\alphaup}{2}.
\end{equation}

To prove that $n=0$, we assume by the way of contradiction that $n>0.$ Then $z\cdot v_{1}\neq 0$ in $V$, and
\begin{eqnarray*}
[x,y]\cdot(z\cdot v_{1}) & = & x\cdot(y\cdot(z\cdot v_{1}))-y\cdot(x\cdot(z\cdot v_{1}))+\om(x,y)(z\cdot v_{1}) \\
 & = &\Big(\eta_{1}-1-\alphaup\Big)  (x\cdot v_{1})-(\eta_{1}+\alphaup) (y\cdot(z\cdot v_{1}))\\
 &=&\Big(\eta_{1}-1-\alphaup\Big) \eta_{1} \cdot v_{1}-(\eta_{1}+\alphaup) \Big(\eta_{1}-1+\alphaup\Big)\cdot v_{1}\\
 &=&-\alphaup\Big(\eta_{1}-1-\alphaup\Big) \cdot v_{1}
\end{eqnarray*}
Since $[x,y]=y$ in $C_{\alphaup}$, then $[x,y]\cdot v=y\cdot v$ for any element in  $V$. Thus 
$$y\cdot(z\cdot v_{1})=[x,y]\cdot(z\cdot v_{1})$$ gives rise to $\eta_{1}-1-\alphaup =  -\alphaup(\eta_{1}-1-\alphaup) 
$, i.e.,
\begin{equation}
 (1+\alphaup)(\eta_{1}-1-\alphaup) =0.
\end{equation}
Since $\alphaup\neq -1$, it follows that $$\eta_{1}-1-\alphaup=0$$
which, combining with Equation (\ref{eq7.3}) obtains
$$\alphaup=-\frac{(n-2)\alpha}{2}.$$
Recall that $\alphaup\neq 0$. Therefore, $n=0$. 
The proof is completed. 
 \end{proof}

\begin{rem}{\rm
If we extend the parameter $\alphaup$ in $C_{\alphaup}$ to contain the case of $\alphaup=-1$, then it is easy to see that $C_{-1}=\mathfrak{sl}_{2}(\C)$, the special linear Lie algebra, which is trivial as an $\om$-Lie algebra. It is well-known that up to equivalence,  $C_{-1}$ has one unique irreducible representation in any dimension $n<\infty$, see for example, Fulton-Harris \cite[Section 11.1]{FH1991}. This result, comparing with 
Theorem \ref{thm7.1} shows that representation theory of nontrivial $\om$-Lie algebras might be very ``degenerate'' .
} \end{rem}

Finally, we give some remarks for more potential research in the theory of $\om$-Lie algebras.

\begin{rem}{\rm
Note that our many results might be valid over other fields of characteristic zero, especially over the field 
$\mathbb{R}$ of real numbers. Real $\om$-Lie algebras should deserve more attention  because the motivation and the first application of introducing $\om$-Lie algebras  were presented over $\mathbb{R}$
in mathematical physics, see Nurowski \cite{Nur2007}. Based on the links between physics and low-dimensional 
nonassociative algebras have been found, for example, see Zhang-Bai \cite{ZB2012} and reference therein, we also expect that 
our computations might have more  applications in mathematical physics. 

In terms of pure mathematics, there are several possible directions concerning the theory of $\om$-Lie algebras that might be worth 
investigating. The first one was to develop  a theory of infinite dimensional  $\om$-Lie algebras. In the last section of 
 Chen-Zhang \cite{CZ2016}, we have already constructed an example of nontrivial infinite dimensional $\om$-Lie algebras. But it seems that 
 there are no more results even examples appeared so far. The second one was the study of solvable and nilpotent 
 $\om$-Lie algebras. In Chen-Liu-Zhang \cite{CLZ2014} and \cite{CZ2016}, we have completed the classification of complex simple finite-dimensional $\om$-Lie algebras. It is natural to consider the structures of solvable and nilpotent 
 $\om$-Lie algebras. In this paper we did not discuss the solvability and nilpotence of $\om$-Lie algebras but according to 
 the experience we gained, the present calculations probably provide a concrete guide for more research in this direction. 
 The third direction was the restricted version of $\om$-Lie algebras, i.e., $\om$-Lie algebra over a field of characteristic $p>0$ 
 satisfying some restricted conditions in sense of Jacobson \cite[Chapter V, Section 7]{Jac1979}.
 However, we do not know how to give a correct definition of restricted $\om$-Lie algebras and whether there is a 
 big difference between modular $\om$-Lie algebras and modular Lie algebras.
} \end{rem}

\section*{\textit{Acknowledgments}}

This research was partially supported by NNSF of China (No. 11301061, 11401087) and Ministry of Educations of China (No. 201510200029). We would like to thank the referee for careful reading and useful comments.


\end{document}